\newtheorem{thm}{Theorem}[subsection]
\newtheorem{prop}[thm]{Proposition}
\newtheorem{lemma}[thm]{Lemma}
\newtheorem{defn}[thm]{Definition}
\newtheorem{rk}{Remark}
\newtheorem{question}[thm]{Question}
\newtheorem{ex}{Example}[subsection]
\newtheorem*{thma}{Theorem 3.1.5}
\newtheorem*{thmb}{Theorem 3.2.1}
\newtheorem*{thmc}{Theorem 4.2.1}
\newtheorem*{ack}{Acknowledgements}
\newcommand{\EE}{\mathbb{E}}
\newcommand{\ZZ}{\mathbb{Z}}
\newcommand{\RR}{\mathbb{R}}
\begin{document}

\title {Metrics on Visual Boundaries of CAT(0) Spaces} 
\author{ Molly A. Moran}
\thanks{The contents of this paper constitutes part of the author's dissertation for the degree of Doctor of Philosophy at the University of Wisconsin-Milwaukee under the direction of Professor Craig Guilbault.}

\begin{abstract} 
 A famous open problem asks whether the asymptotic dimension of a CAT(0) group
is necessarily finite. For hyperbolic $G$, it is known that
$\operatorname*{asdim}G$ is bounded above by $\dim\partial G+1$, which is
known to be finite. For CAT(0) $G$, the latter quantity is also known to be
finite, so one approach is to try proving a similar inequality. So far those
efforts have failed. 

Motivated by these questions we work toward understanding the relationship
between large scale dimension of CAT(0) groups and small scale dimension of
the group's boundary by shifting attention to the linearly controlled dimension of
the boundary. To do that, one must choose appropriate metrics for the
boundaries. In this paper, we suggest two candidates and develop some basic
properties. Under one choice, we show that linearly controlled dimension of
the boundary remains finite; under another choice, we prove that macroscopic
dimension of the group is bounded above by $2\cdot\ell$-$\dim \partial G+1$. Other useful
results are established, some basic examples are analyzed, and a variety of
open questions are posed.

\end{abstract}

\maketitle

\section{Introduction}

In \cite{Mo14} and \cite{GuMo15}, it was shown that coarse (large-scale) dimension properties of a space $X$ can impose restrictions on the classical (small-scale) dimension of boundaries attached to $X$. A natural question to ask is if the converse is true. For example, one might hope to use the finite-dimensionality of $\partial G$, proved first in \cite{Swe99} and following as a corollary of Theorem A in \cite{Mo14}, to attack the following well-known open question:
\begin{question} Does every CAT(0) group have finite asymptotic dimension?\end{question}

This question provides motivation for much of the work in what follows. Although we do not answer Question 1.0.1, a framework is developed that we expect will lead to future progress. Along the way, we prove some results that we hope are of independent interest; one such result is a partial solution to Question 1.0.1 that captures the spirit of our approach. 

As is often the case with questions about CAT(0) groups, Question 1.0.1 is rooted in known facts about hyperbolic groups. Gromov observed that all hyperbolic groups have finite asymptotic dimension. A more precise bound on the asymptotic dimension, which helps to establish our point of view, is the following:

\begin{thm}\cite{BuSc07,BuLe07} For a hyperbolic group, $\emph{asdim}G=\emph{dim}\partial G+1=\ell\emph{-dim}\partial G+1<\infty$. \end{thm}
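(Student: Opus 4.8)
The plan is to split this single chain of equalities into three independent assertions and attack each separately: finiteness $\dim\partial G<\infty$, the asymptotic-dimension formula $\operatorname{asdim}G=\ell$-$\dim\partial G+1$, and the coincidence $\ell$-$\dim\partial G=\dim\partial G$. Throughout I fix a word metric on $G$, identify $G$ with a net in its Cayley graph $X$, which is Gromov hyperbolic and visual, and equip $Z:=\partial G$ with a visual metric. Since $\operatorname{asdim}$ is a quasi-isometry invariant, $\operatorname{asdim}G=\operatorname{asdim}X$, so it suffices to work with $X$.

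For the finiteness I would first observe that, because $G$ acts cocompactly on $X$, the visual boundary $(Z,d)$ is a doubling metric space. A doubling space has finite linearly controlled dimension, and since $\dim Z\le\ell$-$\dim Z$ always holds, this already gives $\dim Z<\infty$ (recovering the finiteness noted via \cite{Swe99} and Theorem A of \cite{Mo14}).

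The central step is the two-sided inequality $\operatorname{asdim}X=\ell$-$\dim Z+1$. For the upper bound I would pull covers back from the boundary: a linearly controlled cover of $Z$ of multiplicity $\ell$-$\dim Z+1$ produces, via shadows of boundary sets onto the metric annuli $\{x:|x|\approx R\}$, a cover of each annulus of the same multiplicity whose Lebesgue number grows linearly in $R$; stacking these annular covers along the radial direction and using hyperbolicity to bound the interaction between consecutive annuli yields an asymptotic cover of $X$ of multiplicity $\ell$-$\dim Z+1$, so $\operatorname{asdim}X\le\ell$-$\dim Z+1$. For the reverse inequality I would use the hyperbolic cone construction: build $\operatorname{Co}(Z)$, verify it is hyperbolic with $\partial\operatorname{Co}(Z)=Z$, and invoke that a visual hyperbolic space is quasi-isometric to the cone over its boundary, so $\operatorname{asdim}X=\operatorname{asdim}\operatorname{Co}(Z)$; then a slicing argument shows that any asymptotic coloring of $\operatorname{Co}(Z)$ restricts on horospheres to a linearly controlled coloring of $Z$, giving $\ell$-$\dim Z+1\le\operatorname{asdim}\operatorname{Co}(Z)$.

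Finally, for $\ell$-$\dim Z=\dim Z$ the inequality $\dim Z\le\ell$-$\dim Z$ is automatic, so the content is $\ell$-$\dim Z\le\dim Z$. Here I would exploit that $Z$ is approximately self-similar: the cocompact $G$-action lets one transport a nearly optimal finite cover realizing $\dim Z$ from one scale to all scales with uniformly bounded distortion, upgrading a topologically efficient cover into a genuinely linearly controlled family. The hard part, and where I expect the real work to concentrate, is this last step together with the cone-comparison in the lower bound of the middle assertion: verifying hyperbolicity and the boundary identification for $\operatorname{Co}(Z)$, and converting approximate self-similarity into quantitative control that is uniform across all scales, are exactly the technical cores of \cite{BuSc07} and \cite{BuLe07}.
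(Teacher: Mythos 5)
Your statement is one the paper imports wholesale from \cite{BuSc07,BuLe07} rather than proves; the only ingredient the paper itself describes (in the opening of Section 3.2) is your third step, that $\ell$-$\dim\partial G=\dim\partial G$ follows from compactness and local self-similarity of the boundary. So the comparison is with the cited arguments, and your three-part decomposition --- upper bound by pulling boundary covers into annuli, lower bound through the hyperbolic cone, equality of the small-scale dimensions via self-similarity, finiteness from doubling --- is indeed their architecture. The genuine gap is in your lower bound. If $\operatorname{asdim}\operatorname{Co}(Z)\leq n$, an asymptotic coloring has $n+1$ colors; restricting it to the sphere of radius $R$ about the cone point and rescaling by $e^{R}$ yields an $(n+1)$-coloring of $Z$ at scale $\lambda\approx e^{-R}$ with mesh linearly controlled by $\lambda$, and letting $R\to\infty$ this proves only $\ell$-$\dim Z\leq n=\operatorname{asdim}\operatorname{Co}(Z)$, one short of the claimed $\ell$-$\dim Z+1\leq\operatorname{asdim}\operatorname{Co}(Z)$. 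Concretely, for $X=\mathbb{H}^{2}$ your slices give $\ell$-$\dim S^{1}\leq 2$, which is true and useless; sharpness requires showing $\mathbb{H}^{2}$ admits no asymptotic $2$-coloring. The $+1$ lives entirely in the radial coordinate, which a spherical slice discards. The known proofs keep it: coarsely $X$ is $Z\times[0,\infty)$, deep annuli rescale to copies of $Z\times[0,1]$, and one converts a multiplicity-$(n+1)$ cover of $X$ into a small-mesh cover of the product, then invokes $\dim(Z\times[0,1])=\dim Z+1$ (valid for compact metric $Z$) to conclude $\dim Z+1\leq n$. Some device of this kind, gaining a dimension from the radial direction rather than slicing it away, is unavoidable.

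Your upper bound has a related off-by-one that is not merely a typo but sits exactly on the technical core. To prove $\operatorname{asdim}X\leq\ell$-$\dim Z+1$ you must produce covers of multiplicity $\ell$-$\dim Z+2$; you assert your stacked cover has multiplicity $\ell$-$\dim Z+1$, which would prove $\operatorname{asdim}X\leq\ell$-$\dim Z$ and contradict the theorem itself (again test against $\mathbb{H}^{2}$). What ``using hyperbolicity to bound the interaction between consecutive annuli'' must actually accomplish is that stacking the annular covers raises multiplicity by exactly one rather than doubling it, and this is hard: the paper's own Theorem 4.2.1 runs precisely this pull-back-and-stack argument for CAT(0) spaces with the $\overline{d}_{x_{0}}$ metric and, because families from consecutive annuli interleave, only reaches order $2n+2$, i.e. macroscopic dimension $2n+1$. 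Buyalo's sharp $+1$ in the hyperbolic case requires a substantially more careful construction; together with the product argument above and the self-similarity step you correctly flagged as the core of \cite{BuLe07}, that is where the real content of the cited proofs lies.
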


In this theorem `$\operatorname*{asdim}$' denotes \emph{asymptotic dimension},
`$\dim$' denotes \emph{covering dimension}, and `$\ell$-$\dim$' denotes
\emph{linearly controlled dimension}. All of these terms will be explained in
Section 2.3. For now, we note that linearly controlled dimension is similar to, but
stronger than, covering dimension; both are small-scale invariants defined
using fine open covers. The difference is that $\ell$-$\dim$ is a metric
invariant, requiring a linear relationship between the mesh and the Lebesgue
numbers of the covers used. 


Implicit in the statement of Theorem 1.0.2 is that $\partial G$ be endowed with
a \emph{visual metric}. There is a family of naturally occurring visual
metrics on $\partial G$, but all are \emph{quasi-symmetric }to one-another.
That is enough to make $\ell$-$\dim\partial G$ well-defined. This also will be
explained shortly.

We can now summarize the content of this paper. We begin by reviewing a
number of key definitions and properties from CAT(0) geometry. Next, we recall definitions of quasi-isometry and
quasi-symmetry, and then we discuss variations, both small- and large-scale,
on the notion of dimension. To bring the utility of linearly controlled dimension to CAT(0) spaces, it is necessary to have specific metrics on their visual boundaries. Although CAT(0)
boundaries are important, well-understood, and metrizable, specific metrics
have seldom been used in a significant way. In Sections 3 and 4, we develop two
natural families of metrics for CAT(0) boundaries and verify a number of their
basic properties. One of these families $\left\{  d_{A,x_{0}}\right\}
_{x_{0}\in X}^{A>0}$ was discussed in \cite{Ka07}, where B. Kleiner asked whether the
induced action on $\partial X$ of a geometric action on a proper CAT(0) space
$X$ is ``nice''. After first showing that all metrics in the family $\left\{  d_{A,x_{0}%
}\right\}  _{x_{0}\in X}^{A>0}$ are quasi-symmetric in Section 3.1, we provide an affirmative answer to Kleiner's question with the following: 

\begin{thma}
Suppose $G$ acts geometrically on a proper CAT(0) space $X$, $x_{0}\in X$ and
$A>0$. Then the induced action of $G$ on $\left(  \partial X,d_{x_{0}%
,A}\right)  $ is by quasi-symmetries.
\end{thma}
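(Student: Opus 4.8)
The plan is to avoid estimating the distortion of $\partial g$ on $\left(\partial X,d_{x_{0},A}\right)$ directly, and instead to exploit the naturality of the construction $d_{x_{0},A}$ under isometries of $X$ together with the change-of-basepoint comparison already established in Section 3.1. Recall that every $g\in G$ is an isometry of $X$ and therefore extends to a homeomorphism $\partial g$ of the visual boundary $\partial X$; the content of the theorem is that $\partial g$ is a quasi-symmetry of $\left(\partial X,d_{x_{0},A}\right)$. The idea is to factor $\partial g$ through the metric based at the translated point $g\cdot x_{0}$, where its effect becomes transparent.

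The first and central step is a naturality lemma: for any isometry $g$ of $X$, the induced map
\[
\partial g\colon \left(\partial X,d_{x_{0},A}\right)\longrightarrow\left(\partial X,d_{g x_{0},A}\right)
\]
is an isometry. This should follow essentially from the definition of the family. The metric $d_{x_{0},A}$ is built from isometry-invariant geometric data attached to the geodesic rays issuing from the basepoint $x_{0}$ (their angles, and the rate at which they diverge), and an isometry $g$ carries the ray from $x_{0}$ representing $\xi$ to the ray from $g x_{0}$ representing $\partial g(\xi)$ while preserving all distances along and between such rays. Consequently every ingredient in the formula for $d_{x_{0},A}(\xi,\eta)$ is transported exactly to the corresponding ingredient for $d_{g x_{0},A}\!\left(\partial g\,\xi,\partial g\,\eta\right)$, yielding $d_{g x_{0},A}\!\left(\partial g\,\xi,\partial g\,\eta\right)=d_{x_{0},A}(\xi,\eta)$. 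I expect this verification to be the main step of the proof, though it is largely bookkeeping: it requires unwinding the precise definition of $d_{x_{0},A}$ and checking term by term that $g$ respects it.

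With the lemma in hand, the theorem follows by composition. Viewing $\partial g$ as a self-map of $\left(\partial X,d_{x_{0},A}\right)$, I would write it as the composite
\[
\left(\partial X,d_{x_{0},A}\right)\xrightarrow{\ \partial g\ }\left(\partial X,d_{g x_{0},A}\right)\xrightarrow{\ \mathrm{id}\ }\left(\partial X,d_{x_{0},A}\right).
\]
The first arrow is an isometry by the lemma, hence a quasi-symmetry with control function $\eta(t)=t$. The second arrow is the identity map regarded as a map between two members of the family $\left\{d_{A,x_{0}}\right\}_{x_{0}\in X}^{A>0}$, and Section 3.1 shows that any two metrics in this family are quasi-symmetric to one another, so the identity here is a quasi-symmetry. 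Since a composition of quasi-symmetries is again a quasi-symmetry, with control functions composing, $\partial g$ is a quasi-symmetry of $\left(\partial X,d_{x_{0},A}\right)$, as desired.

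Two remarks on the hypotheses. The assumption that $X$ be proper CAT(0) is what makes $\partial X$ and the metrics $d_{x_{0},A}$ well behaved, and it underlies the Section 3.1 result invoked above. Cocompactness of the $G$-action is not needed to see that each individual $g$ acts as a quasi-symmetry, but it becomes relevant if one wants the stronger conclusion that $G$ acts \emph{uniformly} by quasi-symmetries, with a single control function serving for all $g\in G$; there the point needing care is that the quasi-symmetry constant of the change-of-basepoint map from $d_{g x_{0},A}$ to $d_{x_{0},A}$ be bounded independently of $g$, and I would appeal to coarse density of the orbit $G\cdot x_{0}$ to control it.
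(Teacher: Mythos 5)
Your proposal is correct and takes essentially the same route as the paper: the paper's proof likewise observes that, since $g$ acts by isometries, $d_{A,x_{0}}([\alpha],[\gamma])=d_{A,gx_{0}}([g\alpha],[g\gamma])$ (i.e., $\partial g$ is an isometry onto the metric based at $gx_{0}$), and then concludes by applying the change-of-basepoint quasi-symmetry of Lemma 3.1.4 together with closure of quasi-symmetries under composition. Your final remark is also on target: the paper uses a different control function for each $g\in G$ and explicitly poses the existence of a single uniform control function as an open question.
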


In Section 3.2, we look to prove analogs of Theorem 1.0.2 for CAT(0) spaces. The
question of whether $\ell$-dimension of a CAT(0) group boundary agrees with
its covering dimension (under either of our metrics) is still open, but we can prove:

\begin{thmb}
If $G$ is a CAT(0) group, then $\left(  \partial G,d_{A,x_{0}}\right)  $ has
finite $\ell$-dimension.
\end{thmb}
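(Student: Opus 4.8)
The plan is to establish finite $\ell$-dimension of $(\partial G, d_{A,x_0})$ by leveraging the known finiteness of the covering dimension together with the metric structure of the $d_{A,x_0}$ family, and crucially the fact that $G$ acts by quasi-symmetries (Theorem 3.1.5). My strategy would exploit the rigidity that the geometric group action imposes: even though $\ell$-dimension is a metric invariant that is generally more delicate than covering dimension, the cocompactness of the action should let me upgrade a good cover at one scale into a uniformly controlled family of covers at all scales.

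First I would recall that $\partial G$ is compact (since $X$ is proper and $G$ acts geometrically, $X$ is cocompact and the boundary is compact) and that its covering dimension $n = \dim \partial G$ is finite by the Swenson/Moran results cited in the introduction. The definition of $\ell$-dimension requires exhibiting, for every sufficiently small $\epsilon > 0$, an open cover of $(\partial G, d_{A,x_0})$ of mesh at most $\epsilon$, Lebesgue number at least $c\epsilon$ for a uniform constant $c>0$, and multiplicity at most $n+1$. So I would fix one ``base'' cover realizing the covering dimension at some scale, and then aim to produce the covers at all other scales by pushing this base cover around using group elements.

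The key step, which I expect to be the main obstacle, is controlling how the metric $d_{A,x_0}$ rescales under the $G$-action. Because $G$ acts cocompactly on $X$, for any basepoint and any scale there is a group element $g$ whose action on $X$ moves things by a controlled amount; the induced map on $\partial G$ is then a quasi-symmetry by Theorem 3.1.5, but quasi-symmetries need not preserve the linear relationship between mesh and Lebesgue number unless the distortion is uniformly bounded. The heart of the proof should therefore be a quantitative estimate showing that the quasi-symmetry distortion of the maps $g \colon (\partial G, d_{A,x_0}) \to (\partial G, d_{A,x_0})$ arising from a cocompact set of group elements is uniformly bounded, so that translating the base cover by such $g$ keeps multiplicity bounded by $n+1$ and keeps the mesh-to-Lebesgue ratio bounded independently of scale. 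Concretely, I would try to show that zooming in near a point of $\partial G$ via the group action is comparable to a metric rescaling, using explicit control on how $d_{A,x_0}$ behaves when the basepoint $x_0$ is moved by an element of $G$.

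Finally, having produced, for a cofinal sequence of scales $\epsilon_k \to 0$, covers of uniformly bounded multiplicity $n+1$ and uniformly controlled Lebesgue-to-mesh ratio, I would interpolate to all small $\epsilon$ (a routine comparison, since consecutive scales differ by a bounded factor) to conclude that $\ell\text{-}\dim(\partial G, d_{A,x_0}) \leq n < \infty$. I expect the subtle points to be purely quantitative: verifying that the distortion constants genuinely do not degenerate as the scale shrinks, and confirming that the $d_{A,x_0}$ metric interacts with the group action closely enough for the rescaling comparison to hold. The topological input (finiteness of covering dimension and existence of a finite-multiplicity cover) is available off the shelf; the real work is metric bookkeeping of the quasi-symmetry constants under the cocompact action.
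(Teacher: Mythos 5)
There is a genuine gap, and it sits exactly where you predicted the ``main obstacle'' would be: the group action on a CAT(0) boundary cannot be used to move covers between scales. Unlike hyperbolic boundaries, CAT(0) boundaries are not approximately self-similar, and the induced boundary action of a geometric action can be completely degenerate. The simplest example already kills the strategy: for $G=\ZZ^2$ acting by translations on $X=\RR^2$, every group element sends each geodesic ray to a parallel (hence asymptotic) ray, so the induced action on $\partial X\cong S^1$ is the \emph{identity}. No choice of ``cocompact set of group elements'' produces any zooming maps at all, so a base cover at one scale can never be transported to smaller scales this way. Moreover, the conclusion you aim for, $\ell\text{-}\dim(\partial G,d_{A,x_0})\leq \dim\partial G$, is strictly stronger than the theorem: the paper explicitly states it has been unable to compare $\ell$-dimension with covering dimension, and poses precisely this inequality as an open question in Section 6. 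A proof whose output resolves an open problem posed alongside the theorem should be treated as a red flag.

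The paper's actual proof sidesteps the boundary action entirely. Cocompactness gives a \emph{single} uniformly bounded, finite-order open cover $\mathscr{V}$ of the interior space $X$ (translates of a ball, Lemma 3.2.2), with mesh at most $4R$ and Lebesgue number at least $R$. For each target scale $\lambda$ one looks at the sphere of radius $t_\lambda=1/\lambda$ about $x_0$ and pulls $\mathscr{V}$ out to the boundary radially: $U_V=\{[\gamma] : \gamma(t_\lambda)\in V\}$. The CAT(0) comparison estimate (Lemma 2.1.2, the similar-triangles convexity bound) converts the mesh and Lebesgue bounds of $\mathscr{V}$ on that sphere into mesh at most $\tfrac{4R}{A}\lambda$ and Lebesgue number at least $\lambda$ for the boundary cover, with order bounded by the order of $\mathscr{V}$. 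So the self-similarity needed for linear control at all scales comes from the radial geodesic structure of $X$ itself, not from any dynamics on $\partial X$; correspondingly, the dimension bound obtained is the order of the cover of $X$ coming from the group action, which has no a priori relation to $\dim\partial G+1$.
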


As for the equality in Theorem 1.0.2, we are thus far unable to use the $\ell
$-dimension of $\left(  \partial X,d_{A,x_{0}}\right)  $ to make conclusions
about the asymptotic dimension of $X$. Instead we turn to our other family of
metrics $\left\{  \overline{d}_{x_{0}}\right\}  $. In some sense, these
boundary metrics retain more information about the interior space $X$. That
additional information allows us to prove the following theorem, which we view
as a weak solution to Question 1.0.1. It is our primary application of the
$\overline{d}_{x_{0}}$ metrics.

\begin{thmc}
Suppose $X$ is a geodesically complete CAT(0) space and, when endowed with the $\overline{d}_{x_0}$ metric for $x_0\in X$,  $\ell$-$\dim\partial X\leq n$. Then the
macroscopic dimension of $X$ is at most $2n+1$.
\end{thmc}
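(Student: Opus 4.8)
The plan is to build a uniformly bounded open cover of $X$ of multiplicity at most $2n+2$; its nerve is then a simplicial complex of dimension at most $2n+1$, and the associated nerve map from $X$ has uniformly bounded point-preimages, which is exactly what is needed to certify that the macroscopic dimension of $X$ is at most $2n+1$. The whole construction transports covers of the boundary inward along geodesic rays, using geodesic completeness to guarantee that every point of $X\setminus\{x_0\}$ lies on a ray emanating from $x_0$, so that the radial projection $\rho\colon X\setminus\{x_0\}\to\partial X$ is defined and restricts to a surjection on each sphere $S_R=\{x: d(x_0,x)=R\}$.

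First I would pin down the calibration between boundary scale and radius, the feature that singles out the $\overline{d}_{x_0}$ metric. By convexity of the distance function in a CAT(0) space, for rays $\gamma_\xi,\gamma_\eta$ issuing from $x_0$ the quantity $d(\gamma_\xi(t),\gamma_\eta(t))$ is a nondecreasing convex function of $t$ vanishing at $0$, so the set of radii at which the two rays stay within a fixed gauge is an interval $[0,T(\xi,\eta)]$. I would exploit the defining feature of $\overline{d}_{x_0}$, namely that $\overline{d}_{x_0}(\xi,\eta)$ is comparable to $1/T(\xi,\eta)$; consequently a ball of radius $\epsilon$ in $(\partial X,\overline{d}_{x_0})$ has, at radius $R\approx 1/\epsilon$, a radial shadow on $S_R$ of uniformly bounded diameter in the metric of $X$. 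This inverse-linear correspondence is exactly why the \emph{linear} relationship built into $\ell$-$\dim$ (rather than the exponential one carried by the visual metrics $d_{A,x_0}$) is the right hypothesis: it makes the bound on shadow diameters independent of the scale.

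Next I would invoke $\ell$-$\dim\partial X\leq n$, which supplies a constant $c>0$ so that for every sufficiently small $\epsilon$ there is an open cover $\mathcal{U}_\epsilon$ of $\partial X$ with multiplicity at most $n+1$, mesh at most $\epsilon$, and Lebesgue number at least $c\epsilon$. Choosing a discrete progression of radii $R_j$ whose spacing is governed by $c$, together with matching scales $\epsilon_j\approx 1/R_j$, I would pull $\mathcal{U}_{\epsilon_j}$ back through $\rho$ and restrict it to the annulus $A_j=\{x: R_j\le d(x_0,x)\le R_{j+1}\}$. The mesh bound together with the calibration of the preceding paragraph forces each resulting piece to have uniformly bounded $X$-diameter, while the Lebesgue-number bound guarantees that the pieces genuinely cover $A_j$ with multiplicity no larger than that of $\mathcal{U}_{\epsilon_j}$, namely $n+1$.

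Finally I would assemble the annular covers into a single cover of $X$, letting consecutive annuli overlap slightly so that the union is open and each point lies in at most two consecutive annuli. Within one annulus the multiplicity is at most $n+1$, so across the overlap of two annuli it is at most $2(n+1)=2n+2$; this is the source of the factor of two in the conclusion. The main obstacle I anticipate is the third step: verifying that the pulled-back pieces are simultaneously (i) uniformly bounded in the $X$-metric across \emph{all} annuli and (ii) still a cover of each annulus with controlled multiplicity. Both require turning the qualitative statement ``radial shadows are bounded'' into uniform quantitative estimates, and matching the progression $\{R_j\}$ to the Lebesgue constant $c$ so that the shadows of a single boundary cover sweep out an entire annulus without gaps is where the CAT(0) convexity estimates must be pushed carefully.
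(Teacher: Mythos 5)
Your overall architecture---an annular decomposition, pushing boundary covers inward along rays, and accepting a factor of $2$ from overlapping consecutive annuli---is exactly the paper's. But your calibration step contains a genuine error that breaks the proof. You assert that the defining feature of $\overline{d}_{x_0}$ is that $\overline{d}_{x_0}(\xi,\eta)$ is comparable to $1/T(\xi,\eta)$, and you accordingly match scale $\epsilon_j\approx 1/R_j$ to radius $R_j$. That inverse-linear behavior is the defining feature of the \emph{other} family: $d_{A,x_0}$ is by definition $1/T$ for gauge $A$. The metric $\overline{d}_{x_0}$ behaves exponentially (this is the content of the paper's Example 5.0.1, where $\overline{d}_{x_0}$ is a visual metric on $\partial T_4$ and $d_A$ is not). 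Since $r\mapsto d(\gamma_\xi(r),\gamma_\eta(r))$ is nondecreasing for rays from $x_0$, the only general estimate relating boundary distance to shadows is
\[ d\bigl(\gamma_\xi(R),\gamma_\eta(R)\bigr)\,e^{-R}\;\le\;\int_R^\infty\frac{d(\gamma_\xi(r),\gamma_\eta(r))}{e^r}\,dr\;\le\;\overline{d}_{x_0}(\xi,\eta), \]
so a boundary set of $\overline{d}_{x_0}$-diameter $\epsilon$ has shadow at radius $R$ of diameter only bounded by $\epsilon e^{R}$. With your choice $\epsilon_j\approx 1/R_j$ this bound is $e^{R_j}/R_j$, which is unbounded, so your assembled cover is not uniformly bounded and certifies nothing about macroscopic dimension. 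This is not merely a lost estimate: in $T_4$, two rays branching at time $t=\ln(2R_j)$ have $\overline{d}_{x_0}$-distance $2e^{-t}=1/R_j$, yet at radius $R_j$ they are $2(R_j-\ln(2R_j))$ apart, so the pulled-back pieces really do blow up. The fix is the opposite, exponential calibration, which is what the paper uses: radii $kR$ paired with scales $\lambda_k=4e^{-kR}$, so that the linear control $\mathrm{mesh}\le c\lambda_k$ supplied by $\ell$-$\dim$ gives shadow diameters at radius $(k+2)R$ at most $c\lambda_k e^{(k+2)R}=4ce^{2R}$, uniform in $k$. Your conceptual claim is thus backwards: what $\ell$-$\dim$ buys is a mesh-to-scale ratio bounded by the same $c$ at every scale, and it is the exponentially small scales $\lambda_k$ that cancel the factor $e^{(k+2)R}$.

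A secondary gap: you treat the radial projection $\rho$ as a map and conclude that a multiplicity-$(n+1)$ cover of $\partial X$ pulls back with multiplicity $n+1$. In a geodesically complete CAT(0) space the extension of the segment $[x_0,x]$ to a ray need not be unique (trees again), so $\rho$ is multivalued, and the natural push-in $V_U=\{\gamma(t):[\gamma]\in U,\ t\in(kR,(k+2)R)\}$ of two \emph{disjoint} boundary sets can overlap. The paper handles this by working with $(n+1)$-colored covers whose color classes are $\lambda_k/2$-disjoint (Buyalo--Schroeder, Lemma 11.1.3) and proving quantitatively that a point at radius at least $kR$ lying on two rays forces their endpoints within $2e^{-kR}=\lambda_k/2$ of each other, so color classes remain disjoint after pushing in. Relatedly, covering each annulus needs no Lebesgue-number argument at all---it is automatic from geodesic completeness and the fact that $\mathscr{U}_k$ covers $\partial X$---so that part of your third step is solving the wrong problem.
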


In Section 5, we compare the $d_{A,x_{0}}$ and $\overline{d}_{x_{0}}$
metrics to each other by applying them to some simple examples. We also
compare them to the established visual metrics when we have a space that is
both CAT(0) and hyperbolic.

Much work remains in this area and thus we conclude with a list of open questions.

\begin{ack} The author would like to thank Craig Guilbault for his guidance and suggestions during the course of this project. \end{ack}

\section{Preliminaries}

Before discussing the possible metrics and their properties, we first review CAT(0) spaces and the visual boundary, quasi-symmetries, and the various dimension theories that will be discussed. The study of metrics on the boundary begins in Section 3. 

\subsection{CAT(0) Spaces and their Geometry}

In this section, we review the definition of CAT(0) spaces, some basic properties of these spaces, and the visual boundary. For a more thorough treatment of CAT(0) spaces, see \cite{BH99}. 

\begin{defn} A geodesic metric space $(X,d)$ is a \textbf{\emph{CAT(0) space}} if all of its geodesic triangles are no fatter than their corresponding Euclidean comparison triangles. That is, if $\Delta(p,q,r)$ is any geodesic triangle in $X$ and $\overline{\Delta}(\overline{p},\overline{q},\overline{r})$ is its comparison triangle in $\EE^2$, then for any $x,y\in\Delta$ and the comparison points $\overline{x},\overline{y}$, then $d(x,y)\leq d_{\EE}(\overline{x},\overline{y})$. 
\end{defn}

A few important properties worth mentioning are that proper CAT(0) spaces are contractible, uniquely geodesic, balls in the space are convex, and the distance function is convex. Furthermore, we now record a very simple geometric property that will be used repeatedly throughout the rest of the paper. 

\begin{lemma} Let $(X,d)$ be a proper CAT(0) space and suppose $\alpha,\beta:[0,\infty)\to X$ are two geodesic rays based at the same point $x_0\in X$. Then for $0<s\leq t<\infty$, $d(\alpha(s),\beta(s))\leq \frac{s}{t}d(\alpha(s),\beta(t))$. 
\end{lemma}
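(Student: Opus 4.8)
The plan is to prove an estimate of this form with the comparison-triangle definition of CAT(0) (Definition 2.1.1), which is exactly the tool such ``cone'' inequalities call for. Since $\alpha$ and $\beta$ are unit-speed geodesic rays with $\alpha(0)=\beta(0)=x_0$, the segments $\alpha|_{[0,t]}$ and $\beta|_{[0,t]}$ are two sides of a geodesic triangle $\Delta(x_0,\alpha(t),\beta(t))$, each of length $t$. First I would pass to a Euclidean comparison triangle $\overline{\Delta}(\overline{x_0},\overline{\alpha(t)},\overline{\beta(t)})\subset\EE^2$ with the same side lengths. Because $d(x_0,\alpha(s))=s$ and $d(x_0,\beta(s))=s$, the comparison points $\overline{\alpha(s)}$ and $\overline{\beta(s)}$ sit at the fraction $s/t$ of the way along the two sides issuing from $\overline{x_0}$. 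Similar-triangle scaling in $\EE^2$ then gives $d_{\EE}(\overline{\alpha(s)},\overline{\beta(s)})=\tfrac{s}{t}\,d_{\EE}(\overline{\alpha(t)},\overline{\beta(t)})$, and the CAT(0) inequality $d(\alpha(s),\beta(s))\le d_{\EE}(\overline{\alpha(s)},\overline{\beta(s)})$ turns this into an upper bound on $d(\alpha(s),\beta(s))$. (Equivalently one may use convexity of the distance function: $g(\tau)=d(\alpha(\tau),\beta(\tau))$ is convex with $g(0)=0$, so $g(s)\le\tfrac{s}{t}g(t)$.)

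The key step, and the main obstacle, is that both routes produce the bound with \emph{both} endpoints taken at the \emph{same} parameter $t$, namely $d(\alpha(s),\beta(s))\le\tfrac{s}{t}\,d(\alpha(t),\beta(t))$, whereas the right-hand side printed above is $\tfrac{s}{t}\,d(\alpha(s),\beta(t))$, with $\alpha$ evaluated at $s$ rather than $t$. Before committing to an argument I would therefore stress-test the literal inequality in the flat plane $\EE^2$, which is CAT(0). Taking $x_0=0$ and the two rays $\alpha(\tau)=(\tau,0)$, $\beta(\tau)=(0,\tau)$ with $s=1$ and $t=2$ gives $d(\alpha(1),\beta(1))=\sqrt{2}$, while $\tfrac{s}{t}\,d(\alpha(1),\beta(2))=\tfrac12\sqrt{5}<\sqrt{2}$. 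So the inequality as worded already fails for two rays meeting at a right angle. More generally the literal bound degrades: at $t=s$ its two sides agree, but for $t>s$ the factor $s/t$ can pull $\tfrac{s}{t}\,d(\alpha(s),\beta(t))$ strictly below $d(\alpha(s),\beta(s))$ whenever the rays diverge at a definite angle, as the example shows.

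Consequently I read the first argument on the right as a misprint, with $\alpha(s)$ intended to be $\alpha(t)$: the genuinely provable, and genuinely useful, statement is the proportionality estimate $d(\alpha(s),\beta(s))\le\tfrac{s}{t}\,d(\alpha(t),\beta(t))$, which is precisely what the comparison-triangle argument above delivers and which is the form a lemma of this flavor is invariably applied in later. With that reading there is no real obstacle and the proof is routine; the only point worth recording is that equality holds exactly in the flat case, so the estimate is sharp and improves precisely as $\Delta(x_0,\alpha(t),\beta(t))$ becomes thinner than its comparison triangle.
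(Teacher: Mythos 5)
Your proof is correct and follows essentially the same route as the paper's own argument: a Euclidean comparison triangle for $\Delta(x_0,\alpha(t),\beta(t))$, similar triangles giving $d_{\EE}(\overline{\alpha(s)},\overline{\beta(s)})=\tfrac{s}{t}\,d_{\EE}(\overline{\alpha(t)},\overline{\beta(t)})$, and the CAT(0) comparison inequality. Your diagnosis of the misprint is also right: the paper's proof actually concludes $d(x,y)\leq\tfrac{s}{t}\,d(p,q)$ with $p=\alpha(t)$, $q=\beta(t)$, i.e.\ it proves $d(\alpha(s),\beta(s))\leq\tfrac{s}{t}\,d(\alpha(t),\beta(t))$, and this is the form invoked in all later applications (Lemmas 3.1.1, 3.1.2, 3.1.4, Theorem 3.2.1), so the printed right-hand side $d(\alpha(s),\beta(t))$ is a typo, which your $\EE^2$ counterexample with perpendicular rays correctly shows cannot be the intended statement.
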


\begin{proof}
Let $p=\alpha(t)$, $q=\beta(t)$, $x=\alpha(s)$, and $y=\beta(s)$. Consider the geodesic triangle $\Delta(x_0, p, q)$ in X and its comparison triangle $\overline{\Delta}(\overline{x_0},\overline{p},\overline{q})$ in $\EE^2$. Let $\overline{x},\overline{y}$ be the corresponding points to $x,y$ on $\overline{\Delta}$. (See picture below.)

\begin{center}
	\includegraphics[scale=0.3]{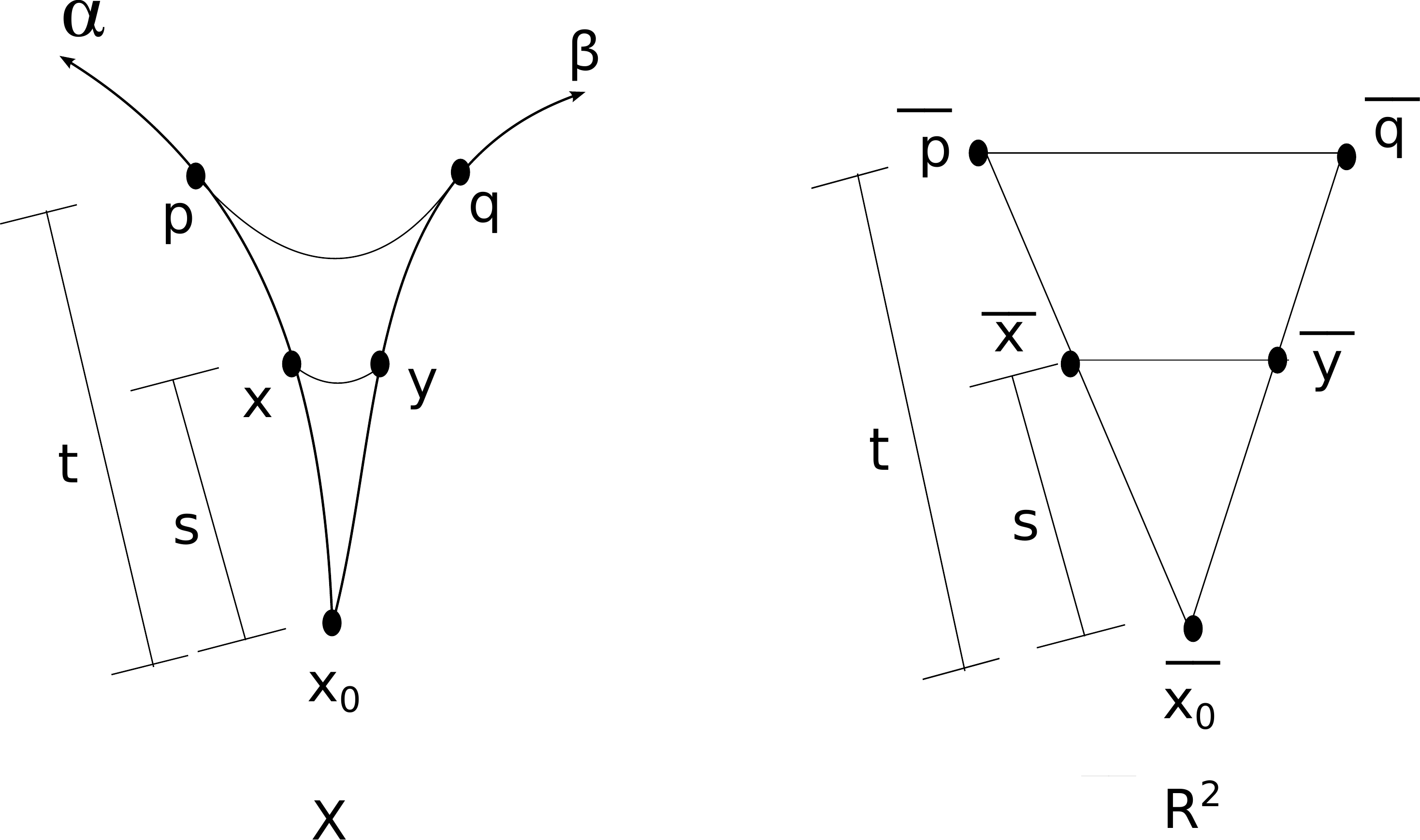}
	\end{center}

By similar triangles in $\EE^2$,
\[ \frac{d_{\EE}(\overline{p},\overline{q})}{d_{\EE}(\overline{x},\overline{y})}=\frac{d_{\EE}(\overline{x_0},\overline{p})}{d_{\EE}(\overline{x_0},\overline{x})}=\frac{t}{s}\]

Thus, $d_{\EE}(\overline{x},\overline{y})=\frac{s}{t}d_{\EE}(\overline{p},\overline{q})=\frac{s}{t}d(p,q)$

Applying the CAT(0)-inequality, we obtain the desired inequality: 
\[d(x,y)\leq \left(\frac{s}{t}\right)d(p,q)\]

\end{proof}

We now review the definition of the  boundary of CAT(0) spaces: 

\begin{defn} The \textbf{\emph{boundary}} of a proper CAT(0) space $X$, denoted $\partial X$, is the set of equivalence classes of rays, where two rays are equivalent if and only if they are asymptotic. We say that two geodesic rays $\alpha, \alpha':[0,\infty)\to X$ are \textbf{\emph{asymptotic}} if there is some constant $k$ such that $d(\alpha(t),\alpha'(t))\leq k$ for every $t\geq 0$. \end{defn}

Once a base point is fixed, there is a a unique representative geodesic ray from each equivalence class by the following:

\begin{prop}[See \cite{BH99} Proposition 8.2] If $X$ is a complete CAT(0) space and $\gamma:[0,\infty)\to X$ is a geodesic ray with $\gamma(0)=x$, then for every $x'\in X$, there is a unique geodesic ray $\gamma':[0,\infty)\to X$ asymptotic to $\gamma$ and with $\gamma'(0)=x'$. \end{prop}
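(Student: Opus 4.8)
The plan is to build $\gamma'$ as a limit of the geodesic segments from $x'$ to the points $\gamma(n)$, to check that this limit is a ray asymptotic to $\gamma$, and finally to get uniqueness almost for free from convexity of the metric. Completeness of $X$ enters exactly once, to guarantee that the limit exists.

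For existence, let $\sigma_n$ be the geodesic from $x'$ to $\gamma(n)$, parametrized by arc length on $[0,L_n]$ with $L_n=d(x',\gamma(n))$, and extended by the constant value $\gamma(n)$ for $t\ge L_n$; note $L_n\to\infty$ since $L_n\ge d(x,\gamma(n))-d(x,x')=n-d(x,x')$. To see that $(\sigma_n(s))_n$ is Cauchy for each fixed $s$, I would compare $\sigma_m$ and $\sigma_n$ (for large $m<n$, so that $L_m,L_n\ge s$) using the Euclidean comparison triangle $\overline{\Delta}(\overline{x'},\overline{\gamma(m)},\overline{\gamma(n)})$. Both $\sigma_m$ and $\sigma_n$ issue from $x'$, so the CAT(0) inequality gives $d(\sigma_m(s),\sigma_n(s))\le 2s\sin(\theta_{m,n}/2)$, where $\theta_{m,n}$ is the angle of the comparison triangle at $\overline{x'}$. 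A law-of-cosines computation, using $|L_n-L_m|\le d(\gamma(m),\gamma(n))=n-m$ together with $L_m\ge m-d(x,x')$, shows that $\sin(\theta_{m,n}/2)\to 0$ as $m\to\infty$, uniformly in $n>m$. Hence each $(\sigma_n(s))_n$ is Cauchy, the convergence is uniform on compact $t$-intervals, and by completeness the limit $\gamma'(t)=\lim_n\sigma_n(t)$ is a well-defined geodesic ray with $\gamma'(0)=x'$.

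It remains to show $\gamma'$ is asymptotic to $\gamma$ and is the only such ray from $x'$. For asymptoticity I would run one more comparison-triangle estimate, this time in $\overline{\Delta}(\overline{x},\overline{x'},\overline{\gamma(n)})$, to bound $d(\gamma(t),\sigma_n(t))$ by a constant depending only on $d(x,x')$, uniformly in $t$ and $n$; letting $n\to\infty$ then bounds $d(\gamma(t),\gamma'(t))$. Since $t\mapsto d(\gamma(t),\gamma'(t))$ is convex (convexity of the metric) and bounded, it is non-increasing, so in fact $d(\gamma(t),\gamma'(t))\le d(x,x')$ for all $t$ and $\gamma'$ is asymptotic to $\gamma$. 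Uniqueness is then immediate: if $\gamma''$ is another ray from $x'$ asymptotic to $\gamma$, then $t\mapsto d(\gamma'(t),\gamma''(t))$ is convex, bounded (both rays track $\gamma$), and vanishes at $t=0$; a bounded convex function on $[0,\infty)$ is non-increasing, so this function is identically $0$ and $\gamma'=\gamma''$. The main obstacle is the existence step, specifically the uniform-in-$n$ decay of the comparison angle $\theta_{m,n}$; once the segments are known to converge, asymptoticity and uniqueness are short consequences of convexity.
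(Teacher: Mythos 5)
The paper itself gives no proof of this proposition: it is quoted from \cite[Proposition 8.2]{BH99}, and the only thing the paper adds is Remark 1, recording the bound $d(\gamma(t),\gamma'(t))\le d(x,x')$ that comes out of the construction. So the relevant comparison is with the standard Bridson--Haefliger argument, and your proposal follows exactly that route: realize $\gamma'$ as a limit of the segments from $x'$ to $\gamma(n)$, then deduce asymptoticity and uniqueness from convexity of the metric. Your comparison bound $d(\sigma_m(s),\sigma_n(s))\le 2s\sin(\theta_{m,n}/2)$ is a correct application of the CAT(0) inequality, the asymptoticity and uniqueness paragraphs are correct as written, and your final estimate $d(\gamma(t),\gamma'(t))\le d(x,x')$ is precisely Remark 1 of the paper.

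There is, however, a gap at the step you yourself flag as the main obstacle: the two facts you propose to feed into the law of cosines do not suffice. From $|L_n-L_m|\le n-m$ and $L_m\ge m-d(x,x')$ alone one only gets
\[
\sin^2\left(\frac{\theta_{m,n}}{2}\right)=\frac{(n-m)^2-(L_n-L_m)^2}{4L_mL_n}\le\frac{(n-m)^2}{4L_mL_n},
\]
and for fixed $m$ this upper bound tends to infinity as $n\to\infty$; indeed, those two hypotheses are consistent with $L_n=L_m$, in which case $\theta_{m,n}$ need not be small at all. What actually makes the comparison angle decay is the near-degeneracy of the triangle $(x',\gamma(m),\gamma(n))$, and for that you need the complementary triangle inequalities $L_n\ge n-d(x,x')$ and $L_m\le m+d(x,x')$. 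These give $L_n-L_m\ge(n-m)-2d(x,x')$, hence
\[
(n-m)^2-(L_n-L_m)^2\le\bigl((n-m)-(L_n-L_m)\bigr)\cdot 2(n-m)\le 4d(x,x')(n-m),
\]
and therefore, for all $n>m>d(x,x')$,
\[
\sin^2\left(\frac{\theta_{m,n}}{2}\right)\le\frac{d(x,x')(n-m)}{L_mL_n}\le\frac{d(x,x')}{m-d(x,x')}.
\]
This is the uniform-in-$n$ decay you need; once it is in place, the remainder of your argument (uniform convergence on compacta, completeness, convexity for asymptoticity and uniqueness) goes through as written.
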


\begin{rk} In the construction of the asymptotic ray for Proposition 2.1.4, it is easy to verify that $d(\gamma(t), \gamma '(t))\leq d(x,x')$ for all $t\geq 0$. 
\end{rk}

We may endow $\overline{X}=X\cup \partial X$, with the \textbf{cone topology}, described below, which makes $\partial X$ a closed subspace of $\overline{X}$ and $\overline{X}$ compact (as long as $X$ is proper). With the topology on $\partial X$ induced by the cone topology on $\overline{X}$, the boundary is often called the \textbf{visual boundary}. In what follows, the term `boundary' will always mean `visual boundary'. Furthermore, we will slightly abuse terminology and call the cone topology restricted to $\partial X$ simply the cone topology if it is clear that we are only interested in the topology on $\partial X$. 

One way in which to describe the cone topology on $\overline {X}$, denoted $\mathscr{T}(x_0)$ for $x_0\in X$, is by giving a basis. A basic neighborhood of a point at infinity has the following form: given a geodesic ray $c$ and positive numbers $r>0$, $\epsilon>0$, let 
\[U(c,r, \epsilon) = \{x \in X | d(x, c(0)) > r, d(p_r(x), c(r)) < \epsilon\}\]
 where $p_r$ is the natural projection of $\overline{X} $ onto $\overline{B}(c(0),r)$.
Then a basis for the topology, $\mathscr{T}(x_0)$, on $\overline{X}$ consists of the set of all open balls $B(x,r) \subset X$, together with the collection of all sets of the
form $U(c,r, \epsilon)$, where $c$ is a geodesic ray with $c(0) = x_0$.

\begin{rk} For all $x_0,x_0'\in X$, $\mathscr{T}(x_0)$ and $\mathscr{T}(x_0')$ are equivalent \cite[Proposition 8.8]{BH99}.\end{rk}
%
%


\subsection{Quasi-Symmetries}

As we are interested in both large-scale and small-scale properties of metric spaces, we briefly discuss two different types of maps that may be used to capture the particular scale we care about. The first type of map is a quasi-isometry. 

\begin{defn} A map $f:(X,d_X)\to(Y,d_Y)$ between metric spaces is a \emph{\textbf{quasi-isometric embedding}} if there exists constants $A,B>0$ such that for every $x,y\in X$, $\frac{1}{A}d_X(x,y)-B\leq d_Y(f(x),f(y))\leq Ad_X(x,y)+B$. Moreover, if there exists a $C>0$ such that for every $z\in Y$, there is some $x\in X$ such that $d_Y(f(x),z)\leq C$, then we call $f$ a \emph{\textbf{quasi-isometry}}. 
\end{defn}

Quasi-isometries capture the large-scale geometry of a metric space, but ignore the small scale-behavior. Thus, they are ideal when studying large scale notions of dimension, which we will discuss briefly in the next section. Since small-scale behavior is ignored, all compact metric spaces turn out to be quasi-isometric because they are all quasi-isometric to a point. Thus, quasi-isometries are not particularly useful when studying compact metric spaces. When interested in compact metric spaces and small-scale behavior, we can turn to a second type of map: quasi-symmetry.

Quasi-symmetric maps were defined to extend the notion of quasi-conformality. Since these maps care about local behavior, they are ideal when studying small scale notions of dimension, in particular, linearly controlled dimension. Quasi-symmetric maps have also played a large role in the the study of hyperbolic group boundaries. For example, it has been shown that all visual metrics on the boundary are quasi-symmetric. 
 
We review the definition and properties that will be needed in later sections. For more information, see \cite{TuVa80} or \cite{He01}. 

\begin{defn} A map $f:X\to Y$ between metric spaces is said to be \textbf{\emph{quasi-symmetric}} if it is not constant and there is a homeomorphism $\eta:[0,\infty)\to[0,\infty)$ such that for any three points $x,y,z\in X$ satisfying $d(x,z)\leq td(y,z)$, it follows that $d(f(x), f(z))\leq\eta(t)d(f(y),f(z))$ for all $t\geq 0$. The function $\eta$ is often called a \textbf{\emph{control function}} of $f$. A \textbf{\emph{quasi-symmetry}} is a quasi-symmetric homeomorphism.
\end{defn}

\begin{thm}\cite[Proposition 10.6]{He01} If $f:X\to Y$ is $\eta$-quasi-symmetric, then $f^{-1}:f(X)\to X$ is $\eta'$-quasi-symmetric where $\eta'(t)=1/\eta^{-1}(t^{-1})$ for $t>0$. Moreover, if $f:X\to Y$ and $g:Y\to Z$ are $\eta_f$ and $\eta_g$ quasi-symmetric, respectively, then $g\circ f:X\to Z$ is $\eta_g\circ\eta_f$ quasi-symmetric. 
 \end{thm}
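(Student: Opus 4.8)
The plan is to treat the two assertions separately, since the composition statement is a direct chaining of inequalities while the statement about inverses requires inverting the control function. For the composition, suppose $f$ is $\eta_f$-quasi-symmetric and $g$ is $\eta_g$-quasi-symmetric, and take $x,y,z\in X$ with $d_X(x,z)\le t\,d_X(y,z)$. Applying the definition to $f$ gives $d_Y(f(x),f(z))\le \eta_f(t)\,d_Y(f(y),f(z))$; feeding the three image points $f(x),f(y),f(z)$ and the ratio $\eta_f(t)$ into the definition for $g$ then yields $d_Z(g(f(x)),g(f(z)))\le \eta_g(\eta_f(t))\,d_Z(g(f(y)),g(f(z)))$. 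Since a composition of homeomorphisms of $[0,\infty)$ is again such a homeomorphism and $g\circ f$ is visibly non-constant, this shows $g\circ f$ is $(\eta_g\circ\eta_f)$-quasi-symmetric.

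For the inverse, I would first note that a quasi-symmetric map is injective: if $f(x)=f(z)$ with $x\ne z$, then choosing $t=d_X(w,z)/d_X(x,z)$ for an arbitrary $w$ and applying the definition to the triple $w,x,z$ forces $d_Y(f(w),f(z))\le\eta(t)\,d_Y(f(x),f(z))=0$, making $f$ constant, a contradiction. Hence $f^{-1}$ is genuinely defined on $f(X)$. The key idea is then to convert the upper bound supplied by $\eta$ into a lower bound by interchanging the two moving points. Take $a=f(x)$, $b=f(y)$, $c=f(z)$ in $f(X)$ with $d_Y(a,c)\le t\,d_Y(b,c)$, and (handling the degenerate cases $x=z$ or $y=z$ separately via injectivity) set $u=d_X(y,z)/d_X(x,z)$. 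Applying $\eta$-quasi-symmetry to the triple $y,x,z$ gives $d_Y(f(y),f(z))\le \eta(u)\,d_Y(f(x),f(z))$, equivalently $d_Y(f(x),f(z))/d_Y(f(y),f(z))\ge 1/\eta(u)$. Comparing with the hypothesis $d_Y(f(x),f(z))/d_Y(f(y),f(z))\le t$ yields $\eta(u)\ge 1/t$, and since $\eta$ is increasing this gives $u\ge \eta^{-1}(t^{-1})$. Rearranging, $d_X(x,z)\le \frac{1}{\eta^{-1}(t^{-1})}\,d_X(y,z)=\eta'(t)\,d_X(y,z)$, exactly the required estimate.

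It remains to check that $\eta'(t)=1/\eta^{-1}(t^{-1})$ is itself a homeomorphism of $[0,\infty)$: as $t$ increases $t^{-1}$ decreases, so $\eta^{-1}(t^{-1})$ decreases and $\eta'$ increases, while $\eta'(t)\to 0$ as $t\to 0^+$ and $\eta'(t)\to\infty$ as $t\to\infty$ follow from the boundary behavior of $\eta^{-1}$; extending by $\eta'(0)=0$ gives a continuous increasing bijection. The only genuine obstacle is getting the direction of every inequality right while inverting the monotone $\eta$, together with the bookkeeping of the degenerate cases where one of the distances vanishes; once the roles of $x$ and $y$ are swapped in the correct way, the algebra is forced.
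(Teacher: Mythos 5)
The paper offers no proof of this statement at all---it is quoted as a known result from \cite[Proposition 10.6]{He01}---so there is no internal argument to compare against; your proposal is correct and is essentially the standard proof from that reference. Both halves are sound: the composition part by chaining the two control inequalities, and the inverse part by the key move of swapping the roles of the two moving points (applying the definition to the triple $y,x,z$ with ratio $u=d_X(y,z)/d_X(x,z)$) to turn the upper bound from $\eta$ into a lower bound, which, after using injectivity (correctly derived from non-constancy) to rule out the degenerate cases, forces $d_X(x,z)\le \bigl(1/\eta^{-1}(t^{-1})\bigr)\,d_X(y,z)$ and confirms that $\eta'$ is again a homeomorphism of $[0,\infty)$.
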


\begin{thm}\cite[Theorem 11.3]{He01}A quasi-symmetric embedding $f$ of a uniformly perfect space $X$ is $\eta$-quasi-symmetric with $\eta$ of the form $\eta(t)=c*\emph{max}\{t^{\delta},t^{1/\delta}\}$ where $c\geq 1$ and $\delta\in(0,1]$ depends only on $f$ and $X$. 
\end{thm}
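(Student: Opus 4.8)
The plan is to build the power-type control function by passing to the \emph{optimal} control function of $f$ and showing that uniform perfectness forces it to be almost submultiplicative, after which power growth is extracted by iteration. First I would set, for $t\geq 0$,
\[ \eta(t)=\sup\Bigl\{\, \tfrac{d(f(x),f(z))}{d(f(y),f(z))} : x,y,z\in X,\ y\neq z,\ d(x,z)\le t\,d(y,z)\,\Bigr\}. \]
Since $f$ is quasi-symmetric this supremum is finite for every $t$, and $\eta$ is non-decreasing and is itself a control function for $f$. Hence it suffices to dominate this $\eta$ by a function of the form $c\cdot\max\{t^{\delta},t^{1/\delta}\}$; note that the dominating function is itself a homeomorphism of $[0,\infty)$, so I never need $\eta$ to be continuous.

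The key step is to establish quasi-submultiplicativity using uniform perfectness. Let $\lambda\in(0,1)$ be the uniform perfectness constant of $X$, and fix $t_1,t_2\geq 1$. Given any triple $(x,y,z)$ with $d(x,z)\le t_1t_2\,d(y,z)$, I would apply uniform perfectness with center $z$ and radius $r=d(x,z)/t_1$ to produce a point $w$ with $\lambda\,d(x,z)/t_1\le d(w,z)\le d(x,z)/t_1$. A one-line computation then gives $d(x,z)\le (t_1/\lambda)\,d(w,z)$ and $d(w,z)\le t_2\,d(y,z)$, so splitting the ratio across $w$ and applying the quasi-symmetry inequality to each factor yields
\[ \frac{d(f(x),f(z))}{d(f(y),f(z))}=\frac{d(f(x),f(z))}{d(f(w),f(z))}\cdot\frac{d(f(w),f(z))}{d(f(y),f(z))}\le \eta(t_1/\lambda)\,\eta(t_2). \]
Taking the supremum over such triples gives $\eta(t_1t_2)\le \eta(t_1/\lambda)\,\eta(t_2)$.

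Next I would iterate. Specializing $t_2=1/\lambda^2$ and writing $\tau=t_1/\lambda^2$ turns the displayed inequality into the recursion $\eta(\tau)\le K\,\eta(\lambda\tau)$ for $\tau\geq 1/\lambda^2$, where $K=\eta(1/\lambda^2)$ depends only on $f$ and $X$. Applying it roughly $\log_{1/\lambda}\tau$ times drives the argument down to the fixed scale $1/\lambda^2$ and produces $\eta(\tau)\le C\,\tau^{1/\delta}$ for large $\tau$, with $1/\delta=\log_{1/\lambda}K$. For the small-scale direction I would pass to the inverse: quasi-symmetric images of uniformly perfect spaces are again uniformly perfect (a routine check), so $f(X)$ is uniformly perfect and, by the inverse relation recorded above, $f^{-1}$ is quasi-symmetric on it; the large-scale estimate just proved, applied to $f^{-1}$ and transported back through $\eta'(t)=1/\eta^{-1}(t^{-1})$, gives $\eta(t)\le C't^{\delta'}$ for small $t$. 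Absorbing constants and replacing $\delta,\delta'$ by a common exponent then yields the claimed bound $\eta(t)\le c\cdot\max\{t^{\delta},t^{1/\delta}\}$.

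The main obstacle is the insertion step. The delicate point is that uniform perfectness only supplies the intermediate point $w$ at the prescribed scale \emph{up to} the factor $\lambda$, which is exactly why one gets the shift $t_1\mapsto t_1/\lambda$ rather than clean submultiplicativity; I would need to track this shift carefully through the iteration so that the resulting exponent and constant depend only on $f$ and $X$. A further technicality is that uniform perfectness guarantees points only at radii below $\operatorname{diam}X$, so scales $r=d(x,z)/t_1$ comparable to or exceeding the diameter must be treated separately, either by observing that the extremal ratios are realized at scales well below the diameter or by handling the bounded case directly.
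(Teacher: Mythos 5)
The paper gives no proof of this statement: it is quoted directly from \cite{He01} (Theorem 11.3) and used as a black box, so the relevant comparison is with the standard proof of Heinonen and Tukia--V\"ais\"al\"a rather than with anything in the paper itself. Your outline is correct, and with two small repairs it is a complete argument. First, the ``technicality'' you flag at the end is vacuous: for $t_1\geq 1$ the radius $r=d(x,z)/t_1$ satisfies $r\leq d(x,z)$, so $x$ itself witnesses $X-B(z,r)\neq\emptyset$, and uniform perfectness applies at exactly the scales you need; no separate treatment of radii near or above the diameter is required. Second, in the small-$t$ step the optimal control function need not be invertible, so rather than literally plugging it into $\eta'(t)=1/\eta^{-1}(t^{-1})$ you should either majorize it by a homeomorphism before inverting or argue directly: given $d(x,z)\leq t\,d(y,z)$ with $t\leq 1$, set $\tau=d(f(x),f(z))/d(f(y),f(z))$, apply the quasi-symmetry of $f^{-1}$ (Theorem 2.2.3) to the image triple to get $1/t\leq\eta'_{\mathrm{opt}}(1/\tau)$, and invoke your large-argument power bound for $\eta'_{\mathrm{opt}}$ when $\tau\leq 1$; the case $\tau>1$ forces $t$ to be bounded below and is absorbed into the constant. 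The core steps -- the insertion inequality $\eta(t_1t_2)\leq\eta(t_1/\lambda)\eta(t_2)$, its iteration, and the use of quasi-symmetry invariance of uniform perfectness to run the same argument for $f^{-1}$ on $f(X)$ -- are all sound.

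It is worth noting that your route is a genuine dual of the standard one. Heinonen's argument uses uniform perfectness to chain points at geometrically spaced scales between $d(x,z)$ and $d(y,z)$, with the spacing $\epsilon$ chosen so that $\eta(\epsilon)\leq 1/2$ (this is where one needs $\eta$ to be a homeomorphism, so that $\eta(s)\to 0$ as $s\to 0$); this yields the small-$t$ bound $\eta(t)\leq Ct^{\delta}$ directly, and the large-$t$ bound is then pulled back through the inverse map. You instead extract the large-$t$ bound directly from a single-insertion submultiplicativity relation -- which, notably, uses only finiteness and monotonicity of the optimal control function, not $\eta(s)\to 0$ -- and obtain the small-$t$ bound from the inverse. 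The two proofs are of comparable length; yours packages the chaining into a cleaner algebraic recursion, while the standard one avoids the bookkeeping in your transport step.
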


We say that a metric space $X$ is \textbf{uniformly perfect} if there exists a $c>1$ such that for all $x\in X$ and for all $r>0$, the set $B(x,r)-B(x,\frac{r}{c})\neq \emptyset$ whenever $X-B(x,r)\neq\emptyset$. Some examples of uniformly perfect spaces include connected spaces and the Cantor ternary set. Being uniformly perfect is a quasi-symmetry invariant \cite{He01}. 


\subsection{A Review of Various Dimension Theories}

Recall that the \textbf{covering dimension} of a space $X$ is at most $n$, denoted dim$X\leq n$, if every open cover of $X$ has an open refinement of order at most $n+1$. The covering dimension can be studied for any topological space, in particular, spaces need not be metrizable. However, if $X$ is a compact metric space, we may use the following to show finite covering dimension. 

\begin{lemma} For a compact metric space $X$, \emph{dim}$X\leq n$ if, for every $\epsilon>0$, there is a cover of $X$ with mesh smaller than $\epsilon$ and order at most $n+1$.\end{lemma}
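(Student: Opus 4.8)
The plan is to verify the definition of covering dimension directly: I would show that every open cover of $X$ admits an open refinement of order at most $n+1$, which is exactly the assertion $\dim X\leq n$. The device that converts the metric hypothesis (small mesh) into the topological conclusion (existence of a refinement) is the Lebesgue number lemma, which is available precisely because $X$ is compact. The whole proof is therefore a short bridge between these two facts, with essentially no computation.

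Concretely, I would fix an arbitrary open cover $\mathcal{U}$ of $X$ and invoke compactness to produce a Lebesgue number $\delta>0$ for $\mathcal{U}$, so that every subset of $X$ of diameter less than $\delta$ is contained in some member of $\mathcal{U}$. Applying the hypothesis with $\epsilon=\delta$ then yields a cover $\mathcal{V}$ of $X$ of mesh less than $\delta$ and order at most $n+1$. Since each $V\in\mathcal{V}$ has diameter less than $\delta$, each such $V$ sits inside some member of $\mathcal{U}$; thus $\mathcal{V}$ is a refinement of $\mathcal{U}$, and by construction it already has order at most $n+1$. This is exactly the refinement demanded by the definition, so $\dim X\leq n$.

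The step requiring the most care is ensuring that the refinement qualifies as \emph{open}, since covering dimension is defined via open refinements. If the covers furnished by the hypothesis are understood to be open covers (as the intended applications supply), then $\mathcal{V}$ is open and nothing further is needed. Should one wish to start from arbitrary covers, the remedy is to swell each member slightly to an open neighborhood: in a metric space one may take a small enough open neighborhood of each set so that the mesh stays below $\delta$ and no new intersections are created, leaving the order at most $n+1$. Finally, by compactness one can pass to finite subcovers at every stage, which can only lower the order, so the finiteness built into the usual definition presents no additional obstacle.
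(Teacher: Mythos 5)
Your main argument is the standard proof of this fact, which the paper itself states without proof as recalled background: fix an open cover $\mathcal{U}$ of $X$, take a Lebesgue number $\delta>0$ (this is where compactness enters), apply the hypothesis with $\epsilon=\delta$ to get a cover $\mathcal{V}$ of mesh smaller than $\delta$ and order at most $n+1$, and observe that $\mathcal{V}$ is then a refinement of $\mathcal{U}$ of order at most $n+1$. Read with ``cover'' meaning ``open cover'' in the hypothesis --- which is the intended reading, and the one the paper's applications supply --- this is complete and correct, and there is no proof in the paper to compare it against.

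Your last paragraph, however, contains a genuine error: arbitrary covers cannot in general be ``swollen'' to open covers without creating new intersections, and in fact the lemma becomes false if ``cover'' is allowed to mean an arbitrary cover. Concretely, cover $[0,1]$ by the half-open intervals $[k\epsilon,(k+1)\epsilon)\cap[0,1]$ for $k=0,1,2,\dots$; these are pairwise disjoint with mesh at most $\epsilon$, so for every $\epsilon>0$ they give a cover of order $1$, yet $\dim[0,1]=1$. The swelling fails exactly here: any open set containing $[\epsilon,2\epsilon)$ must contain points to the left of $\epsilon$ and hence meets $[0,\epsilon)$. (The classical swelling lemma applies to finite families of \emph{closed} sets in a normal space; passing from arbitrary sets to their closures can raise the order, as this example shows.) So the correct repair is not to swell, but simply to state that the hypothesis concerns open covers; with that stipulation your first two paragraphs stand as a correct and complete proof.
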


In the preceding lemma, we use the terms `mesh' and `order'. We now define this terminology, along with a few other terms needed for the other dimension theories.  Given a cover $\mathscr{U}$ of a metric space $X$, we define mesh$(\mathscr{U})=\sup\{\text{diam}(U)|U\in\mathscr{U}\}$. We say that the cover $\mathscr{U}$ is \textbf{uniformly bounded} if there exists some $D>0$ such that mesh$(\mathscr{U})\leq D$. The \textbf{order} of $\mathscr{U}$ is the smallest integer $n$ for which each element $x\in X$ is contained in at most $n$ elements of $\mathscr{U}$. The \textbf{Lebesgue number} of $\mathscr{U}$, denoted $\mathscr{L(U)}$, is defined as $\mathscr{L(U)}=\text{inf}_{x\in X}\mathscr{L}(\mathscr{U},x)$, where $\mathscr{L}(\mathscr{U},x)=\text{sup}\{d(x, X-U)|U\in\mathscr{U}\}$ for each $x\in X$.

%
%

 One reason for pointing out the alternate characterization of covering dimension for compact metric spaces is that the other dimension theories that we discuss here are restricted to metric spaces. These restrictions are due to the need for control of Lebesgue numbers as well as the mesh of covers. In particular, we record two properties for covers that will be used to characterize the different notions of dimension. 
 
 Let $\mathscr{U}$ be a uniformly bounded open cover of a metric space $X$. We say that $\mathscr{U}$ has 
 \begin{itemize}

\item Property $\mathscr{P}_\lambda^n$ if $\mathscr{L(U)}\geq\lambda$ and order$(\mathscr{U})\leq n+1$.
\item Property $\mathscr{P}_{\lambda, c}^n$ if $\mathscr{L(U)}\geq\lambda$, mesh$(\mathscr{U)}\leq c\lambda$, and order$(\mathscr{U})\leq n+1$ 

\end{itemize}
This second property requires not only a given Lebesgue number, but also a linear relationship between the mesh of the cover and the Lebesgue number. These two properties capture key requirements in the remaining dimension theories, which we now describe, organized in terms of large-scale and small-scale properties. 

\begin{defn} Let $X$ be a metric space.

\begin{enumerate}
	\item The \textbf{\emph{macroscopic dimension}} of $X$ is at most $n$, denoted $\dim_{\emph{mc}}X\leq n$, if there exists a single uniformly bounded open cover of $X$ with order  $n+1$. 
	\item The \textbf{\emph{asymptotic dimension}} of $X$ is at most $n$, denoted \emph{asdim}$X\leq n$, if for every $\lambda>0$, there exists a cover $\mathscr{U}$ with Property $\mathscr{P_\lambda}^n$.
	\item The \textbf{\emph{linearly-controlled asymptotic dimension}} of $X$ is at most $n$, denoted $\ell\emph{-asdim}X\leq n$, if there exists $c\geq 1$ and $\lambda_0>0$ such that for all $\lambda\geq \lambda_0$, there is a cover $\mathscr{U}$ with Property $\mathscr{P}_{\lambda,c}^n$. 
	\item The \textbf{\emph{Assouad-Nagata dimension}} of $X$ is at most $n$, denoted \emph{ANdim}$X\leq n$, if there exists $c\geq 1$, such that for all $\lambda>0$, there is a cover $\mathscr{U}$ with Property $\mathscr{P}_{\lambda,c}^n$.
	\item The \textbf{\emph{linearly-controlled dimension}} of $X$ is at most $n$, denoted\\ $\ell\emph{-dim}X\leq n$, if there exists $c\geq 1$ and $\lambda_0>0$ such that for all $0<\lambda\leq\lambda_0$, there is a cover $\mathscr{U}$ with Property $\mathscr{P}_{\lambda,c}^n$.
	\end{enumerate}
\end{defn}

We wish to record a few facts about the various dimension theories, as well as some relationships between them:
\begin{enumerate}
	\item Asymptotic dimension and linearly-controlled asymptotic dimension are quasi-isometry 		invariants of a metric space. For a nice survey of asymptotic dimension, see \cite{BDr07}. It has 	become widely studied due in part to its relationship to the Novikov Conjecture. 
	\item Assouad-Nagata dimension is a quasi-symmetry invariant \cite{LS05}. Since 	$\ell\text{-dim}X= \text{ANdim}X$ for bounded metric spaces,  linearly-controlled  dimension is a quasi-symmetry invariant for bounded metric spaces
	\item In fact, linearly-controlled metric dimension is a quasi-symmetry invariant of a larger class of 	metric spaces: uniformly perfect metric spaces \cite{BuSc07}.
	\item For a metric space $X$, we have the following comparisons:  
		\[\text{mdim}X\leq\text{dim}X\leq\ell\text{-dim}X\leq \text{ANdim}X\]
		\[\text{mdim}X\leq\text{asdim}X\leq\ell\text{-asdim}X\leq\text{ANdim}X\]
\end{enumerate}

For more on the above dimension theories, see \cite{BuSc07}

\section{ The $d_{A,x_0}$ metrics}	

We are now ready to define the first family of metrics on the visual boundary of a CAT(0) space: the $d_{A,x_0}$ metrics.  

Fix a base point $x_0\in X$ and choose $A>0$. For $[\alpha],[\beta]\in\partial X$, let $\alpha:[0,\infty)\to X$ and $\beta:[0,\infty)\to X$ be the geodesic rays based at $x_0$ and asymptotic to $[\alpha] $ and $[\beta]$, respectively. Let $a\in(0,\infty)$ be such that $d(\alpha(a),\beta(a))=A$. If such an $a$ does not exist, set $a=\infty$. Then, define $d_{A,x_0}:\partial X\times\partial X\to \RR$ by 
		\[d_{A,x_0}([\alpha],[\beta])=\frac{1}{a}\]
  

\subsection{Basic Properties of the $d_{A,x_0}$ metrics}

Before discussing any properties of the $d_{A,x_0}$ metrics, we must first show that each member of the family is indeed a metric and induces the cone topology on $\partial X$. 

\begin{lemma} If $(X,d)$ is a CAT(0) space and $x_0\in X$, then $d_{A,x_0}$ for any $A>0$ is a metric on $\partial X$. \end{lemma}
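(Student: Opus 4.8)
The plan is to verify the three metric axioms for $d_{A,x_0}$ directly from its definition, namely nonnegativity with identity of indiscernibles, symmetry, and the triangle inequality. Throughout, for each class $[\alpha]\in\partial X$ I use the unique geodesic ray $\alpha$ based at $x_0$ representing it (as guaranteed by Proposition 2.1.4), so the function is well-defined. Symmetry is immediate since $d(\alpha(a),\beta(a))=d(\beta(a),\alpha(a))$, so the defining value $a$ is the same for $([\alpha],[\beta])$ and $([\beta],[\alpha])$. For nonnegativity, note $d_{A,x_0}=1/a\ge 0$, with the convention that $a=\infty$ gives $d_{A,x_0}=0$; I must check identity of indiscernibles, i.e. $d_{A,x_0}([\alpha],[\beta])=0$ if and only if $[\alpha]=[\beta]$.

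The key technical ingredient underlying both the well-definedness of $a$ and the identity of indiscernibles is the monotonicity of the function $t\mapsto d(\alpha(t),\beta(t))$. I would first establish that this function is nondecreasing and, in fact, strictly increasing (until it possibly stabilizes), which follows directly from Lemma 2.1.2: for $0<s\le t$ we have $d(\alpha(s),\beta(s))\le \frac{s}{t}d(\alpha(s),\beta(t))\le d(\alpha(s),\beta(t))$, and combined with convexity of the distance function this forces monotonicity. Monotonicity guarantees that the value $a$ with $d(\alpha(a),\beta(a))=A$, when it exists, is unique, so $d_{A,x_0}$ is genuinely well-defined as $1/a$. For identity of indiscernibles: if $[\alpha]=[\beta]$ then the two rays coincide, $d(\alpha(t),\beta(t))\equiv 0$, so the level $A$ is never reached, forcing $a=\infty$ and $d_{A,x_0}=0$. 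Conversely, if $[\alpha]\ne[\beta]$, then the rays are distinct and by the monotone growth of $d(\alpha(t),\beta(t))$ (which tends to a positive value or to $\infty$) the level $A$ is attained at some finite $a$, giving $d_{A,x_0}>0$.

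The main obstacle is the triangle inequality: I must show $\frac{1}{c}\le \frac{1}{a}+\frac{1}{b}$, where $a,b,c$ are the parameters associated to the pairs $([\alpha],[\gamma])$, $([\gamma],[\beta])$, and $([\alpha],[\beta])$ respectively. Equivalently, I want to show that if $d(\alpha(a),\gamma(a))=A$ and $d(\gamma(b),\beta(b))=A$, then the value $c$ at which $d(\alpha(c),\beta(c))=A$ satisfies $c\ge \frac{ab}{a+b}$, i.e. $\frac{1}{c}\le\frac{1}{a}+\frac{1}{b}$. The idea is to exploit Lemma 2.1.2 to get upper bounds on the pairwise distances at a small scale: for any $s\le \min\{a,b\}$, we have $d(\alpha(s),\gamma(s))\le \frac{s}{a}A$ and $d(\gamma(s),\beta(s))\le \frac{s}{b}A$ by applying the lemma with $t=a$ and $t=b$ respectively (so that the distance at level $t$ equals $A$). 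Then the ordinary triangle inequality in $X$ gives
\[
d(\alpha(s),\beta(s))\le d(\alpha(s),\gamma(s))+d(\gamma(s),\beta(s))\le s\left(\frac{1}{a}+\frac{1}{b}\right)A.
\]
The crux is then to choose $s=\frac{ab}{a+b}=\left(\frac1a+\frac1b\right)^{-1}$, which is $\le\min\{a,b\}$, so that the right-hand side becomes exactly $A$; this shows $d(\alpha(s),\beta(s))\le A$ at this particular $s$. By monotonicity, the level $A$ for the pair $(\alpha,\beta)$ is reached no earlier than $s$, hence $c\ge s=\frac{ab}{a+b}$, which is precisely the triangle inequality $\frac{1}{c}\le\frac{1}{a}+\frac{1}{b}$. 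I would take care with the boundary cases where one or more of $a,b,c$ equals $\infty$ (i.e. the level $A$ is never attained), checking that the inequality degenerates appropriately in each such case.
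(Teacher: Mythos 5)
Your proof is correct and takes essentially the same approach as the paper's: the heart of both arguments is Lemma 2.1.2 plus the triangle inequality in $X$, yielding $c\ge \frac{ab}{a+b}$; the paper evaluates at the time $c$ where the $\alpha$-to-$\gamma$ distance equals $A$ and derives the lower bound directly, while you evaluate at $s=\frac{ab}{a+b}$ and then invoke monotonicity of $t\mapsto d(\alpha(t),\beta(t))$ --- a cosmetic reorganization of the same computation. One imprecision worth fixing: in the positivity direction you write that the distance function ``tends to a positive value or to $\infty$''; if it merely tended to a positive value below $A$, the level $A$ would never be attained and your inference would fail. What is actually true (and what the paper uses, via convexity of the CAT(0) metric) is that for distinct rays from $x_0$ this function always tends to $\infty$: once $d(\alpha(s),\beta(s))>0$, Lemma 2.1.2 gives $d(\alpha(t),\beta(t))\ge \frac{t}{s}\,d(\alpha(s),\beta(s))$ for all $t\ge s$, a linear lower bound, so the level $A$ is attained at a finite time by the intermediate value theorem.
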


\begin{proof} Fix a base point $x_0\in X$ and choose $A>0$. Let $[\alpha],[\beta],[\gamma]\in\partial X$ and $\alpha, \beta,\gamma: [0,\infty)\to X$ be the geodesic rays based at $x_0$ and asymptotic to $[\alpha],[\beta],[\gamma]$, respectively. 

Clearly, $d_{A,x_0}([\alpha],[\alpha])=0$ since $d(\alpha(t),\alpha(t))=0$ for every $t\geq 0$ and hence $a=\infty$. If $d_{A,x_0}([\alpha],[\beta])=0$, then there is no $a\in (0,\infty)$ such that $d(\alpha(a),\beta(a))=A$. By convexity of CAT(0) metric, this means $d(\alpha(t),\beta(t))=0$ for every $t\geq 0$. Hence, $\alpha=\beta$, which means $[\alpha]=[\beta]$. Also, $d_A([\alpha],[\beta])=d_A([\beta],[\alpha])$ since $d(\alpha(t),\beta(t))=d(\beta(t),\alpha(t))$. Finally, to verify the triangle inequality, suppose $a,b,c\in (0,\infty]$ satisfy 
	\[d_{A,x_0}([\alpha],[\beta])=\frac{1}{a} \, , \, 
	d_{A,x_0}([\beta], [\gamma])=\frac{1}{b} \, , \, 
	d_{A,x_0}([\alpha],[\gamma])=\frac{1}{c}\]
If $c\geq a $ or $c\geq b$, then 
\[ d_{A,x_0}([\alpha],[\gamma])=\frac{1}{c}\leq \frac{1}{a}\leq \frac{1}{a}+\frac{1}{b}=d_{A,x_0}([\alpha],[\beta])+d_{A,x_0}([\beta], [\gamma])\]
or 
\[d_{A,x_0}([\alpha],[\gamma])=\frac{1}{c}\leq \frac{1}{b}\leq \frac{1}{a}+\frac{1}{b}=d_{A,x_0}([\alpha],[\beta])+d_{A,x_0}([\beta], [\gamma])\]
Thus, the only interesting case is if $c<a$ and $c<b$. By Lemma 2.1.2 
\[d(\alpha(c),\beta(c))\leq\frac{c}{a}A\]
and 
\[d(\beta(c), \gamma(c))\leq\frac{c}{b}A\]
Then, 
\[A=d(\alpha(c),\gamma(c))\leq d(\alpha(c), \beta(c))+d(\beta(c),\gamma(c))\leq \frac{c}{a}A+\frac{c}{b}A=Ac\left(\frac{a+b}{ab}\right)\]

Thus, 
\[c\geq \frac{ab}{a+b}\]
which proves:
\[d_{A,x_0}([\alpha],[\gamma])=\frac{1}{c}\leq\frac{a+b}{ab}=\frac{1}{a}+\frac{1}{b}=d_{A,x_0}([\alpha],[\beta])+d_{A,x_0}([\beta], [\gamma])\]

\end{proof}

\begin{lemma} The topology induced by the $d_{A,x_0}$ metric on $\partial X$ is equivalent to the cone topology on $\partial X$.
\end{lemma}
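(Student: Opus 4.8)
The plan is to show that the identity map on $\partial X$ is a homeomorphism between the cone topology and the topology induced by $d_{A,x_0}$, by comparing basic neighborhoods of an arbitrary point $[\alpha]$. The main tool is Lemma 2.1.2, together with the observation that the projection $p_r$ sends a boundary point $[\beta]$ to $\beta(r)$ (the point at distance $r$ on its ray from $x_0$), so that a basic cone neighborhood restricted to the boundary has the clean form $U(\alpha,r,\epsilon)\cap\partial X=\{[\beta]\in\partial X:d(\alpha(r),\beta(r))<\epsilon\}$. Thus both topologies are controlled by how long the rays from $x_0$ representing $[\alpha]$ and $[\beta]$ stay close, and the whole proof amounts to making this quantitative.

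First I would record the elementary behavior of $f(t)=d(\alpha(t),\beta(t))$. By Lemma 2.1.2 the ratio $f(t)/t$ is nondecreasing, so $f$ is nondecreasing with $f(0)=0$; moreover, two distinct rays from $x_0$ are non-asymptotic by the uniqueness in Proposition 2.1.4, so $f$ is unbounded, the crossing value $a$ with $f(a)=A$ is finite and unique, and $a=\infty$ precisely when $[\alpha]=[\beta]$. This yields the key reformulation $d_{A,x_0}([\alpha],[\beta])<\delta\iff a>1/\delta\iff d(\alpha(1/\delta),\beta(1/\delta))<A$.

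To show that each $d_{A,x_0}$-ball around $[\alpha]$ contains a cone neighborhood of $[\alpha]$, given $\delta>0$ I would set $r=1/\delta$ and $\epsilon=A$; the reformulation above gives exactly $U(\alpha,r,A)\cap\partial X\subseteq\{[\beta]:d_{A,x_0}([\alpha],[\beta])<\delta\}$. For the reverse inclusion, that each cone neighborhood of $[\alpha]$ contains a $d_{A,x_0}$-ball, I would use Lemma 2.1.2 in the form $d(\alpha(r),\beta(r))\le rA\,d_{A,x_0}([\alpha],[\beta])$, valid whenever $a\ge r$. Given $U(\alpha,r,\epsilon)$, choosing $\delta=\min\{1/r,\ \epsilon/(rA)\}$ first forces $a>r$ (so the bound applies) and then $d(\alpha(r),\beta(r))<\epsilon$, i.e.\ $[\beta]\in U(\alpha,r,\epsilon)$. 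Since $[\alpha]$ is arbitrary and both families are neighborhood bases, the two topologies coincide.

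I do not expect a serious obstacle: once the projection identity and Lemma 2.1.2 are in hand, the estimates are linear and explicit. The only points needing care are the edge case $a=\infty$ (handled by noting it occurs only when $[\alpha]=[\beta]$) and the correct identification of $p_r$ on the boundary, which I would justify directly from the definition of the cone topology. If $X$ is assumed proper, the reverse inclusion comes for free, since $(\partial X,\text{cone})$ is then compact and $(\partial X,d_{A,x_0})$ is Hausdorff, so the continuous bijection from the first direction is automatically a homeomorphism.
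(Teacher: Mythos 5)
Your proof is correct and takes essentially the same approach as the paper's: both directions reduce to the reformulation $d_{A,x_0}([\alpha],[\beta])<\delta \iff d(\alpha(1/\delta),\beta(1/\delta))<A$ together with Lemma 2.1.2 to bound $d(\alpha(r),\beta(r))$ by $rA\,d_{A,x_0}([\alpha],[\beta])$, which are exactly the estimates in the paper's proof. The only differences are cosmetic: you center all neighborhoods at $[\alpha]$ (invoking the standard fact that the sets $U(\alpha,r,\epsilon)$ form a neighborhood basis, where the paper instead finds a neighborhood around each point of a given basic set), and your compact-to-Hausdorff shortcut for the second inclusion is a valid alternative that the paper does not use.
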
 

\begin{proof} Fix $A>0$ and $x_0\in X$. Since the base point is fixed, we will simplify $d_{A,x_0}$ to $d_A$. Consider the basic open set $B_{d_A}([\alpha],\epsilon)$ for $[\alpha]\in\partial X$ and $\epsilon>0$ and let $[\beta]\in B_{d_A}([\alpha],\epsilon)$. Let $\alpha,\beta:[0,\infty)\to X$ be the unique geodesic rays based at $x_0$ corresponding to $[\alpha]$ and $[\beta]$, respectively. Choose $\delta>0$ such that $B_{d_A}([\beta],\delta)\subset B_{d_A}([\alpha],\epsilon)$ and consider the basic open set in the cone topology $U(\beta,\frac{1}{\delta},A)\cap\partial X$. Let $[\gamma]\in U(\beta,\frac{1}{\delta},A)\cap\partial X$. Then $d(\beta(\frac{1}{\delta}),\gamma(\frac{1}{\delta}))<A$. If $a>0$ is the point such that $d(\beta(a),\gamma(a))=A$, then $a>\frac{1}{\delta}$. Thus, $d_A([\beta],[\gamma])=\frac{1}{a}<\delta$. Thus, $[\gamma]\in B_{d_A}([\beta],\delta)\subset B_{d_A}([\alpha],\epsilon)$, proving $[\beta]\in U(\beta,r,A)\cap \partial X \subset B_{d_A}([\alpha],\epsilon)$. 

Now consider a basic open set $U(\alpha, r, \epsilon)\cap\partial X$ in the cone topology where $r>0$, \\$A>\epsilon>0$ and $\alpha:[0,\infty)\to X$ is a geodesic ray based at $x_0$ . Let $[\beta]\in U(\alpha, r, \epsilon)\cap\partial X$. Choose $\delta>0$ such that $B_d(\beta(r),\delta)\cap S(x_0,r)\subset B_d(\alpha(r),\epsilon)\cap S(x_0,r)$ and consider the basic open set in the metric topology $B_{d_A}([\beta],\frac{\delta}{Ar})$. Let $[\gamma]\in B_{d_A}([\beta],\frac{\delta}{Ar})$. Then $d_A([\beta],[\gamma])=\frac{1}{a}<\frac{\delta}{Ar}$ where $a>0$ is such that $d(\beta(a),\gamma(a))=A$, which means $a>r$ since $A>\epsilon\geq \delta$. By Lemma 2.1.2, $d(\gamma(r),\beta(r))\leq \frac{r}{a}A<r\frac{\delta}{Ar}A=\delta$. Thus, $\gamma(r)\in B_d(\beta(r),\delta)\cap S(x_0,r)\subset B_d(\alpha(r),\epsilon)\cap S(x_0,r)$, proving $[\gamma]\in U(\alpha,r,\epsilon)$. Thus $[\beta]\in B_{d_A}([\beta],\frac{\delta}{Ar})\subset U(\alpha,r,\epsilon)$. 
\end{proof} 

\begin{rk} Recall that the cone topology is defined on $\overline{X}=X\cup\partial X$. However, the preceding lemma restricts the cone topology to the boundary since there is not a natural extension of $d_{A,x_0}$ to $\overline{X}$.  
\end{rk}

We now answer two important questions: what happens if we change $A$ and what happens if we move the base point? It turns out that in both cases, the metrics are quasi-symmetric. Thus, by transitivity, all members of the $d_{A,x_0}$ family are quasi-symmetric.

\begin{lemma} Let $X$ be a proper CAT(0)-space. For all  $A, A'>0$, $ id_{\partial X}:(\partial X, d_{A,x_0})\to(\partial X, d_{{A',x_0}})$ is a quasi-symmetry. 
\end{lemma}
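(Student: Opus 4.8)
The plan is to prove the stronger fact that $id_{\partial X}\colon(\partial X,d_{A,x_0})\to(\partial X,d_{A',x_0})$ is \emph{bi-Lipschitz}, from which quasi-symmetry follows immediately. That the map is a homeomorphism is guaranteed either by Lemma 3.1.2 (both metrics induce the cone topology) or, once the Lipschitz bounds are in hand, by the fact that a bi-Lipschitz bijection is automatically a homeomorphism. Without loss of generality I would assume $A\le A'$, the case $A=A'$ being the identity isometry.

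Fix distinct classes $[\alpha],[\beta]\in\partial X$, let $\alpha,\beta$ be their representatives based at $x_0$, and set $g(t)=d(\alpha(t),\beta(t))$. The first step is to record the relevant features of $g$: it satisfies $g(0)=0$, it is nondecreasing and convex, and by Lemma 2.1.2 the ratio $g(t)/t$ is nondecreasing in $t$. Since the rays are distinct there is $t_1$ with $g(t_1)>0$, whence $g(t)\ge (g(t_1)/t_1)\,t\to\infty$; thus the equations $g(a)=A$ and $g(a')=A'$ have solutions $a,a'\in(0,\infty)$, giving $d_{A,x_0}([\alpha],[\beta])=1/a$ and $d_{A',x_0}([\alpha],[\beta])=1/a'$.

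Both Lipschitz bounds then fall out of these monotonicity properties. Because $A\le A'$ and $g$ is nondecreasing, $a\le a'$, hence $1/a'\le 1/a$, i.e.\ $d_{A',x_0}([\alpha],[\beta])\le d_{A,x_0}([\alpha],[\beta])$. For the reverse bound, applying monotonicity of $g(t)/t$ at $a\le a'$ yields $A/a=g(a)/a\le g(a')/a'=A'/a'$, so $a'/a\le A'/A$ and therefore $1/a\le (A'/A)(1/a')$, i.e.\ $d_{A,x_0}([\alpha],[\beta])\le (A'/A)\,d_{A',x_0}([\alpha],[\beta])$. Combining these (the estimate being trivial when $[\alpha]=[\beta]$) gives $d_{A',x_0}\le d_{A,x_0}\le (A'/A)\,d_{A',x_0}$ throughout $\partial X\times\partial X$, so $id_{\partial X}$ is $L$-bi-Lipschitz with $L=A'/A$.

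Finally I would convert bi-Lipschitz to quasi-symmetric by exhibiting the control function $\eta(t)=L^{2}t$: if $d_{A,x_0}(x,z)\le t\,d_{A,x_0}(y,z)$, then $d_{A',x_0}(x,z)\le d_{A,x_0}(x,z)\le t\,d_{A,x_0}(y,z)\le L^{2}t\,d_{A',x_0}(y,z)$, which is exactly the quasi-symmetry inequality. The only delicate bookkeeping is the well-definedness of the parameter $a$ when $g$ vanishes on an initial segment (this occurs precisely when the two rays share an initial geodesic before branching); here convexity saves us, since a convex $g$ with $g(0)=0$ is strictly increasing once it becomes positive, so $a$ is the unique solution of $g(a)=A$ on that region. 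I do not expect any step to be a serious obstacle; the main point is simply to notice that varying $A$ at a \emph{fixed} base point gives the much stronger bi-Lipschitz conclusion, in contrast to the genuinely quasi-symmetric (but not bi-Lipschitz) behavior one expects when the base point moves.
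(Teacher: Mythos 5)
Your proof is correct, and it reorganizes the argument around a strictly stronger conclusion than the paper records. In substance the key estimates coincide: the paper's proof of this lemma (taking $A<A'$) uses convexity of the CAT(0) metric to get $a\le a'$ and $b\le b'$, and Lemma 2.1.2 to get $\frac{Ab'}{A'}\le b$, then chains $\frac{1}{a'}\le\frac{1}{a}\le\frac{t}{b}\le t\,\frac{A'}{A}\,\frac{1}{b'}$, verifying the three-point quasi-symmetry condition directly with $\eta(t)=\frac{A'}{A}t$. Those two inequalities are exactly your Lipschitz bounds $d_{A',x_0}\le d_{A,x_0}$ and $d_{A,x_0}\le\frac{A'}{A}\,d_{A',x_0}$, applied to the pairs $([\alpha],[\gamma])$ and $([\beta],[\gamma])$ respectively, so the paper's argument is implicitly the bi-Lipschitz argument. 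What your route buys is the explicit two-point statement that changing the parameter $A$ at a fixed basepoint is a bi-Lipschitz change of metric --- a fact the paper never states, strictly stronger than quasi-symmetry, and a sharp contrast with the basepoint-change result (Lemma 3.1.5 in the paper's numbering of its proof, stated as the lemma following this one), where only a quasi-symmetry is obtained; it also makes the ``without loss of generality'' harmless, since the bi-Lipschitz bounds invert trivially, whereas the paper's one-sided verification tacitly needs Theorem 2.2.3 to cover the case $A>A'$. Your convexity bookkeeping for the parameter $a$ (strict monotonicity of $g$ once it becomes positive, $g(t)/t$ nondecreasing and unbounded for distinct classes, hence existence and uniqueness of the solution of $g(a)=A$) supplies details the paper takes for granted. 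The one inefficiency is cosmetic: since your first Lipschitz constant is $1$, the chain $d_{A'}(x,z)\le d_A(x,z)\le t\,d_A(y,z)\le \frac{A'}{A}\,t\,d_{A'}(y,z)$ already yields $\eta(t)=\frac{A'}{A}t$, recovering the paper's control function rather than your $\left(\frac{A'}{A}\right)^{2}t$.
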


\begin{proof} Fix a base point $x_0\in X$ and suppose, without loss of generality, that $A<A'$.  Clearly the identity map is a homeomorphism, so we need only verify that $id_{\partial X}$ is a quasi-symmetric map. Let $\eta(t)=\frac{A'}{A}t$; we will show this a control function for $id_{\partial X}$. Suppose that $[\alpha],[\beta],[\gamma]\in \partial X$ with $d_{A,x_0}([\alpha],[\gamma])\leq d_{A,x_0}([\beta],[\gamma])$ for $t>0$. Let $\alpha, \beta,\gamma:[0,\infty)\to X$ be geodesic rays based at $x_0$ that are asymptotic to $[\alpha],[\beta],[\gamma]$, respectively. Let $a,b,a',b'>0$ be such that 
\[d_{A,x_0}([\alpha],[\gamma])=\frac{1}{a} \, , \, d_{A,x_0}([\beta],[\gamma])=\frac{1}{b}\]
\[d_{A',x_0}([\alpha],[\gamma])=\frac{1}{a'}\, , \, d_{A',x_0}([\beta],[\gamma])=\frac{1}{b'}\]

By convexity of CAT(0) metric and since $A'>A$, then $a\leq a'$ and $b\leq b'$. Furthermore, applying Lemma 2.1.2, 
\[A=d(\beta(b),\gamma(b))\leq d_{\EE}(\overline{\beta(b)},\overline{\gamma(b)})=\frac{A'b}{b'}\]

Thus, $\frac{Ab'}{A'}\leq b$. Applying the above,  we obtain the following inequalities:

\[d_{A',x_0}([\alpha],[\gamma])=\frac{1}{a'}\leq\frac{1}{a}=d_{A,x_0}([\alpha],[\gamma])\leq td_{A,x_0}([\beta],[\gamma])=t\frac{1}{b}\leq t\frac{A'}{A}\frac{1}{b'}=\eta(t)d_{A',x_0}([\beta],[\gamma])\]

\end{proof}



\begin{lemma} Suppose $X$ is a complete CAT(0) space. For all $x_0,x_0'\in X$, \\$id_{\partial X}:(\partial X, d_{A,x_0})\to(\partial X, d_{A,x_0'})$ is a quasi-symmetry. 
\end{lemma}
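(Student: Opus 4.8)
The plan is to show that changing the base point from $x_0$ to $x_0'$ gives a quasi-symmetry by estimating how the two metrics compare on a common triple of boundary points. Fix $[\alpha],[\beta],[\gamma]\in\partial X$ and let $\alpha,\beta,\gamma$ be the rays based at $x_0$, while $\alpha',\beta',\gamma'$ denote the rays based at $x_0'$ asymptotic to the same boundary points (these exist and are unique by Proposition 2.1.4). The key quantitative input is Remark 2.1.5: the asymptotic ray based at $x_0'$ stays within $d(x_0,x_0')$ of the corresponding ray based at $x_0$, i.e. $d(\alpha(t),\alpha'(t))\le D$ for all $t$, where $D=d(x_0,x_0')$. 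So for a pair $[\alpha],[\gamma]$, if $a$ is the parameter with $d(\alpha(a),\gamma(a))=A$ (defining $d_{A,x_0}([\alpha],[\gamma])=1/a$) and $a'$ is the parameter with $d(\alpha'(a'),\gamma'(a'))=A$, I want to bound $a'$ above and below in terms of $a$.

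The first step is to turn the uniform bound $D$ into a comparison between $a$ and $a'$. Using the triangle inequality, $d(\alpha'(t),\gamma'(t))$ and $d(\alpha(t),\gamma(t))$ differ by at most $2D$; combining this with Lemma 2.1.2 (which controls how the distance between two rays from a common base point grows roughly linearly in the parameter) should yield two-sided bounds of the form $\lambda a\le a'\le \Lambda a$ for constants depending only on $A$ and $D$, at least once $a$ is large (equivalently, once the boundary points are close in either metric). I expect this to produce an estimate like $d_{A,x_0'}\asymp d_{A,x_0}$ in the small-scale regime, with multiplicative distortion that degrades as the points get far apart in the metric.

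The second step is to convert the comparison of $a$'s into the quasi-symmetry inequality. Given a triple with $d_{A,x_0}([\alpha],[\gamma])\le t\,d_{A,x_0}([\beta],[\gamma])$, I translate into the parameters ($a_\gamma^\alpha \ge \tfrac1t a_\gamma^\beta$), apply the two-sided bounds to pass to the primed parameters, and read off $d_{A,x_0'}([\alpha],[\gamma])\le \eta(t)\,d_{A,x_0'}([\beta],[\gamma])$ for an explicit control function $\eta$. Because the distortion between the two metrics is not a clean scalar multiple (unlike the $A$-versus-$A'$ case in Lemma 3.1.5, where $\eta$ was linear), I anticipate $\eta$ will be of power-type rather than linear. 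Here I would invoke Theorem 2.2.4: the boundary $\partial X$ is connected or at least uniformly perfect in the relevant cases, so any quasi-symmetric \emph{embedding} automatically has a control function of the form $\eta(t)=c\,\mathrm{max}\{t^\delta,t^{1/\delta}\}$; this lets me get away with establishing the defining inequality with \emph{some} $\eta$ rather than an optimal one.

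The main obstacle will be the non-uniformity of the comparison between $a$ and $a'$: the bound from Remark 2.1.5 is additive ($2D$) while the metric is built from reciprocals of parameters, so the distortion is well-controlled for nearby boundary points ($a$ large) but weak for distant ones ($a$ small). The care needed is to show the quasi-symmetry inequality survives uniformly over all triples, not just the small-scale ones. I expect this is handled precisely by the structure of the quasi-symmetry definition, which is scale-free in $t$, together with the power-type control function guaranteed by uniform perfectness; the worst case of large $t$ corresponds to $[\alpha]$ and $[\gamma]$ being very close, which is exactly the regime where the $a\asymp a'$ estimate is strongest, so the two effects should compensate.
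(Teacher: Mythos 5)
Your overall strategy---comparing the parameter $a$ at which the $x_0$-based rays reach distance $A$ with the parameter $a'$ at which the $x_0'$-based rays do, using Remark 2.1.5 and Lemma 2.1.2---is exactly the paper's strategy, and your Step 1 estimates are in substance the computations in the paper's case analysis. The genuine gap is where you defer the uniformity problem. You correctly note that the additive error $2D$ (with $D=d(x_0,x_0')$) yields a multiplicative comparison between $a$ and $a'$ only in a restricted regime, and you propose to close the gap by invoking Theorem 2.2.4. That cannot work, for two reasons. First, Theorem 2.2.4 presupposes that the map is already a quasi-symmetric embedding; it only improves the \emph{form} of the control function. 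Producing some homeomorphism $\eta$ valid for \emph{all} triples is precisely what must be proved, so using Theorem 2.2.4 to ``get away with'' an inequality you have only established in a partial regime is circular. Second, the lemma is stated for arbitrary complete CAT(0) spaces, and $\partial X$ need not be uniformly perfect: for $X=\RR$ the boundary is a two-point space, which fails the definition of uniform perfectness outright. Your closing assertion that ``the two effects should compensate'' is not an argument, and the suggestion that large $t$ forces $[\alpha]$ and $[\gamma]$ to be close is not correct either ($t$ constrains $[\alpha],[\gamma]$ only relative to $[\beta],[\gamma]$).

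The paper closes this gap by a mechanism your proposal never supplies: it first assumes $A>2d(x_0,x_0')$, so the additive error cannot swamp $A$, and in that regime carries out the case analysis to get the linear control function $\eta(t)=\bigl(\tfrac{A}{A-2d(x_0,x_0')}\bigr)^2t$; it then handles arbitrary $A>0$ by subdividing the geodesic from $x_0$ to $x_0'$ into finitely many points spaced less than $\tfrac{A}{2}$ apart and composing the resulting base-point-change quasi-symmetries via Theorem 2.2.3. It is worth noting that your own estimates, pushed slightly harder, would avoid the problem altogether: applying Lemma 2.1.2 at times $t\geq a$ gives $d(\alpha'(t),\gamma'(t))\geq d(\alpha(t),\gamma(t))-2D\geq\tfrac{t}{a}A-2D$, which is at least $A$ when $t=a\bigl(1+\tfrac{2D}{A}\bigr)$, so $a'\leq a\bigl(1+\tfrac{2D}{A}\bigr)$; exchanging the roles of $x_0$ and $x_0'$ gives $a\leq a'\bigl(1+\tfrac{2D}{A}\bigr)$. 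These bounds hold for every pair of boundary points and every $A>0$, making the identity bi-Lipschitz with constant $1+\tfrac{2D}{A}$ and hence quasi-symmetric with $\eta(t)=\bigl(1+\tfrac{2D}{A}\bigr)^2t$, with no case restriction and no chaining. But as written, your proposal neither carries out this symmetric computation nor provides the paper's subdivision argument, so the defining inequality is never established for all triples.
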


\begin{proof} Let $x_0,x_0'\in X$ with $x_0\neq x_0'$. We begin by assuming $A>2d(x_0,x_0')$. We show that $\eta(t)=\left(\frac{A}{A-2d(x_0,x_0')}\right)^2t$ is a control function for $id_{\partial X}$. Suppose that $[\alpha],[\beta],[\gamma]\in\partial X$ and satisfy the inequality $d_{A,x_0}([\alpha],[\gamma])\leq td_{A,x_0}([\beta],[\gamma])$ for $t>0$. Let $\alpha,\beta,\gamma:[0,\infty)\to X$ be geodesic rays based at $x_0$ and asymptotic to the corresponding points in $\partial X$. Let $a,b\in (0,\infty)$ be such that $d_{A,x_0}(\alpha(a),\gamma(a))=A$ and $d_{A,x_0}(\beta(b),\gamma(b))=A$. 

Since $X$ is a complete CAT(0) space, there exists unique geodesic rays $\alpha',\beta',\gamma'$ in $X$ based at $x_0'$ and asymptotic to $\alpha, \beta, \gamma$, respectively. Let $a',b'\in(0,\infty)$ be such that $d_{A,x_0'}(\alpha'(a'),\gamma'(a'))=A$ and $d_{A,x_0'}(\beta'(b'),\gamma'(b'))=A$. There are four cases to consider:

\noindent \underline{Case 1}: $a'\geq a$ and $b\geq b'$. Then
\[d_{A,x_0'}([\alpha],[\gamma])=\frac{1}{a'}\leq\frac{1}{a}=d_{A,x_0}([\alpha],[\gamma])\leq td_{A,x_0}([\beta], [\gamma])=t\frac{1}{b}\]\[\leq t\frac{1}{b'}=td_{A,x_0'}([\beta],[\gamma])\leq\eta (t)d_{A,x_0'}([\beta],[\gamma])\]

\noindent \underline{Case 2}: $a'\geq a$ and $b<b'$. Applying Lemma 2.1.2, $d(\beta'(b),\gamma'(b))\leq\frac{Ab}{b'}$. Thus, $\frac{b'}{A}d(\beta'(b),\gamma'(b))\leq b$. Furthermore, by Remark 1,
\[A=d(\beta(b),\gamma(b))\leq d(\beta(b),\beta'(b))+d(\beta'(b),\gamma'(b))+d(\gamma'(b),\gamma(b))\leq 2d(x_0,x_0')+d(\beta'(b),\gamma'(b))\]

Thus, $A-2d(x_0,x_0')\leq d(\beta'(b),\gamma'(b))$

Applying all of the above,

\[d_{A,x_0'}([\alpha],[\gamma])=\frac{1}{a'}\leq \frac{1}{a}=d_{A,x_0}([\alpha],[\gamma])\leq td_{A,x_0}([\beta], [\gamma])=t\frac{1}{b}\]\[\leq t\frac{A}{d(\beta'(b),\gamma'(b))}\frac{1}{b'}\leq t\frac{A}{A-2d(x_0,x_0')}d_{A,x_0'}([\beta],[\gamma])\leq \eta(t)d_{A,x_0'}([\beta],[\gamma])\]

\noindent \underline{Case 3}: $a'<a$ and $b\geq b'$ Using Lemma 2.1.2, $d(\alpha(a'),\gamma(a'))\leq \frac{Aa'}{a}$. Furthermore, by Remark 1, 
\[A=d(\alpha'(a'),\gamma'(a'))\leq d(\alpha'(a'),\alpha(a'))+d(\alpha(a'),\gamma(a'))+d(\gamma(a'),\gamma'(a'))\leq 2d(x_0,x_0')+d(\alpha(a'),\gamma(a'))\]

Applying the above, 

\[d_{A,x_0'}([\alpha],[\gamma])=\frac{1}{a'}\leq \frac{A}{d(\alpha(a'),\gamma(a'))}\frac{1}{a}\leq \frac{A}{A-2d(x_0,x_0')}\frac{1}{a}=\frac{A}{A-2d(x_0,x_0')}d_{A,x_0}([\alpha],[\gamma])\]\[\leq \frac{A}{A-2d(x_0,x_0')}td_{A,x_0}([\beta], [\gamma])=\frac{A}{A-2d(x_0,x_0')}t\frac{1}{b}\leq \frac{A}{A-2d(x_0,x_0')}t\frac{1}{b'}\]\[=\frac{A}{A-2d(x_0,x_0')}td_{A,x_0'}([\beta], [\gamma])\leq \eta(t)d_{A,x_0'}([\beta],[\gamma])\]

\noindent \underline{Case 4}: $a'<a$ and $b<b'$. Using the computations in Cases 2 and 3:
\[d_{A,x_0'}([\alpha],[\gamma])=\frac{1}{a'}\leq \frac{A}{A-2d(x_0,x_0')}d_{A,x_0}([\alpha],[\gamma])\]\[\leq \frac{A}{A-2d(x_0,x_0')}td_{A,x_0}([\beta], [\gamma])=\frac{A}{A-2d(x_0,x_0')}t\frac{1}{b}\leq t\left(\frac{A}{A-2d(x_0,x_0')}\right)^2\frac{1}{b'}\]\[=t\left(\frac{A}{A-2d(x_0,x_0')}\right)^2d_{A,x_0'}([\beta],[\gamma]) = \eta(t)d_{A,x_0'}([\beta],[\gamma])\]

Thus, $\eta(t)=\left(\frac{A}{A-2d(x_0,x_0')}\right)^2t$ is a control function for $id_{\partial X}$ for $A>2d(x_0,x_0')$.

Now, suppose we are given any $A>0$. Since $X$ is a CAT(0) space, it is path connected. Let $\gamma:[0,d(x_0,x_0')]\to X$ be a geodesic segment connecting $x_0$ to $x_0'$. Let $\{y_0,y_1,...,y_{n-1},y_n\}$ be a partition of $[0,d(x_0,x_0')]$ where $|x_k-x_{k-1}|<\frac{A}{2}$ for $k=1,2,...n$ and set $x_k=\gamma(y_k)$ for $k=0,1,...,n-1$ and $x_0'=\gamma(y_n)$. From above, we know $id^k_{\partial X}:(\partial X,d_{A,x_k})\to(\partial X, d_{A,x_{k-1}})$ is a quasi-symmetry for each $k$. Theorem 2.2.3 guarantees that $id_{\partial X}=id^n_{\partial X}\circ...\circ id^1_{\partial X}:(\partial X, d_{A,x_0})\to(\partial X, d_{A,x_0'})$ is a quasi-symmetry. 
\end{proof}

%
In the future, we will use $d_A$ to denote an arbitrary representative of the family of metrics $\{d_{A,x_0}\}$. When specific calculations are to be done, $A>0$ should be fixed and a base point $x_0$ should be chosen. 

In problem 46 of \cite{Ka07}, B. Kleiner asked whether the group of isometries of a CAT(0) space acts in a ``nice'' way on the boundary.  The following theorem provides one answer. 

\begin{thm} Suppose $G$ is a finitely generated group that acts by isometries on a complete CAT(0) space $X$. Then the induced action of $G$ on $(\partial X, d_{A,x_0})$ is a quasi-symmetry. In other words, $G$ acts by quasi-symmetries on $\partial X$.
\end{thm}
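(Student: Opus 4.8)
The plan is to factor the boundary map induced by a single isometry as an isometry between two members of the $d_{A,x_0}$ family followed by a change of basepoint, and then to invoke the composition law for quasi-symmetries (Theorem 2.2.3). Since the assertion ``$G$ acts by quasi-symmetries'' means exactly that each element of $G$ does, it suffices to treat one isometry $g$ at a time; finite generation of $G$ plays no essential role in the argument. First I would record that an isometry $g\colon X\to X$ induces a well-defined bijection $\hat g\colon\partial X\to\partial X$ by $\hat g[\alpha]=[g\circ\alpha]$: because $g$ preserves distances it carries geodesic rays to geodesic rays and preserves the asymptotic relation (indeed $d(g\alpha(t),g\beta(t))=d(\alpha(t),\beta(t))$), and $\widehat{g^{-1}}$ is its inverse. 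The crucial point to notice is that $g\circ\alpha$ is based at $g(x_0)$, so the natural comparison is between $d_{A,x_0}$ and $d_{A,g(x_0)}$.

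The key step is to show that $\hat g\colon(\partial X,d_{A,x_0})\to(\partial X,d_{A,g(x_0)})$ is an \emph{isometry}. Given $[\alpha],[\beta]\in\partial X$ with rays $\alpha,\beta$ based at $x_0$, the rays $g\circ\alpha$ and $g\circ\beta$ are based at $g(x_0)$ and represent $\hat g[\alpha],\hat g[\beta]$, so by uniqueness (Proposition 2.1.4) they are precisely the representatives used to compute $d_{A,g(x_0)}(\hat g[\alpha],\hat g[\beta])$. If $a\in(0,\infty]$ is the value with $d(\alpha(a),\beta(a))=A$, then since $g$ is an isometry $d(g\alpha(a),g\beta(a))=d(\alpha(a),\beta(a))=A$, so the same $a$ realizes the defining condition on the target side. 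Hence
\[
d_{A,g(x_0)}(\hat g[\alpha],\hat g[\beta])=\tfrac{1}{a}=d_{A,x_0}([\alpha],[\beta]),
\]
so $\hat g$ is an isometry between these two metrics, and in particular a quasi-symmetry with control function $\eta(t)=t$.

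Finally I would write the induced self-map $\hat g\colon(\partial X,d_{A,x_0})\to(\partial X,d_{A,x_0})$ as the composition
\[
(\partial X,d_{A,x_0})\xrightarrow{\ \hat g\ }(\partial X,d_{A,g(x_0)})\xrightarrow{\ id_{\partial X}\ }(\partial X,d_{A,x_0}),
\]
in which the first arrow is the isometry just constructed and the second is a quasi-symmetry by the change-of-basepoint result, Lemma 3.1.4 (which applies for every $A>0$ and uses completeness of $X$). Theorem 2.2.3 then shows that $\hat g$ is a quasi-symmetry, with control function the composite of $\eta(t)=t$ and the one supplied by Lemma 3.1.4. Applying this to each $g\in G$ yields the theorem.

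I expect no serious obstacle here: the entire content sits in the isometry computation, whose only subtlety is bookkeeping of which basepoint each metric uses and confirming that $g\circ\alpha$ and $g\circ\beta$ are exactly the $g(x_0)$-based representatives demanded by the definition of $d_{A,g(x_0)}$. Completeness of $X$ is the one hypothesis that must be invoked twice, both for existence and uniqueness of asymptotic representatives and for the application of Lemma 3.1.4.
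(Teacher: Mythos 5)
Your proposal is correct and follows essentially the same route as the paper: the paper's proof likewise observes that $d_{A,x_0}([\alpha],[\gamma])=d_{A,gx_0}([g\alpha],[g\gamma])$ (i.e., $\hat g$ is an isometry onto $(\partial X,d_{A,gx_0})$, a direct consequence of $g$ being an isometry) and then composes with the change-of-basepoint quasi-symmetry of Lemma 3.1.4. Your write-up simply makes explicit the bookkeeping (uniqueness of $gx_0$-based representatives, the appeal to Theorem 2.2.3, and the irrelevance of finite generation) that the paper leaves implicit.
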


\begin{proof} Fix a base point $x_0\in X$ and $A>0$. Notice that proving this theorem relies on knowing that changing base point is a quasi-symmetry, since if $\alpha,\beta,\gamma:[0,\infty)\to X$ are geodesic rays based at $x_0$, then 
\[d_{A,x_0}([\alpha],[\gamma])=d_{A,gx_0}([g\alpha],[g\gamma])\]
\[d_{A,x_0}([\beta],[\gamma])=d_{A,gx_0}([g\beta],[g\gamma]).\]
This is a simple consequence of the action being by isometries. Hence, to obtain the desired inequality for a quasi-symmetric map, all we need to do is find the distances of the translated rays with respect to the base point $x_0$ rather than $gx_0$. A simple application of Theorem 3.1.4 proves $g$ is a quasi-symmetry. 

%
\end{proof}

\subsection {Dimension Results Using the $d_A$ metric}

In \cite{BuLe07}, it is shown that the linearly controlled dimension of every compact
locally self-similar metric space $X$ is finite and $\ell\hbox{-dim}X=\hbox{dim}X$. Since hyperbolic group boundaries are compact and locally self-similar, we obtain the equality of linearly controlled dimension and covering dimension of hyperbolic group boundaries in Theorem 1.0.2. Swenson shows in \cite{Swe99} that the boundary of a proper CAT(0) space admitting a cocompact action by isometries has finite topological dimension. Since topological dimension can be defined for arbitrary topological spaces, there was no need for a metric on the boundary to prove this fact. Now that we have the $d_A$ family of metrics on the boundary, we can examine the linearly controlled metric dimension. We have been unable to show equality of the two dimensions, but we do show that linearly controlled dimension of a CAT(0) group boundary must be finite. This proof was motivated by previous work found in \cite{Mo14}.

\begin{thm} Suppose $G$ acts geometrically on a proper CAT(0)-space $X$. Then $\ell$-\emph{dim}$(\partial X, d_A)<\infty$.
\end{thm}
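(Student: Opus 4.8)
The plan is to reduce the construction of good covers of $(\partial X,d_A)$ at small scales to the construction of \emph{fixed-scale} covers of metric spheres in $X$, and then to use the geometric action of $G$ to bound the order of the latter uniformly. Throughout I fix $A>0$ and a basepoint $x_0$, and for a geodesic ray $\alpha$ from $x_0$ I write $\pi_r([\alpha])=\alpha(r)\in S(x_0,r)$; this is a continuous injection of $\partial X$ into the sphere of radius $r$. The first step is to record the exact dictionary between a scale $\lambda$ in $(\partial X,d_A)$ and the radius $r=1/\lambda$ in $X$. Directly from the definition of $d_A$ and the fact that $t\mapsto d(\alpha(t),\beta(t))$ vanishes at $0$ and has nondecreasing difference quotient (this is precisely the content of Lemma 2.1.2, which says $d(\alpha(s),\beta(s))/s\le d(\alpha(t),\beta(t))/t$), one obtains the clean equality
\[
B_{d_A}([\alpha],\lambda)=\pi_r^{-1}\big(B_d(\alpha(r),A)\big),\qquad r=1/\lambda,
\]
together with the one-sided comparison $d(\alpha(r),\beta(r))\le DA\ \Rightarrow\ d_A([\alpha],[\beta])\le D\lambda$ for any $D\ge 1$. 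Both statements follow from Lemma 2.1.2 by splitting on whether the rays have already separated to distance $A$ by radius $r$.

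Next I would construct, for each large $r$, a cover of $X$ at the fixed metric scale $A$ and transport it to the boundary. Take a maximal $A$-separated subset $\mathscr{N}\subseteq X$ and set $\mathscr{V}_r=\{B_d(x,2A):x\in\mathscr{N}\}$. Maximality forces every point within $A$ of $\mathscr{N}$, so $\mathscr{V}_r$ has Lebesgue number $\ge A$; the chosen radius gives mesh $\le 4A$; and the order at a point $p$ is the number of net points in $B_d(p,2A)$, i.e.\ the largest cardinality of an $A$-separated set inside a ball of radius $2A$. Pulling back by $\pi_r$ yields an open cover $\mathscr{U}_r=\{\pi_r^{-1}(B_d(x,2A)):x\in\mathscr{N}\}$ of $\partial X$. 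By the dictionary above, Lebesgue number $\ge A$ becomes Lebesgue number $\ge\lambda$ (each $d_A$-ball of radius $\lambda$ is a $\pi_r$-preimage of an $A$-ball, which lies in some $B_d(x,2A)$), mesh $\le 4A$ becomes mesh $\le 4\lambda$ (two points of a cover element have $\pi_r$-images within $4A$, hence are $d_A$-close by the comparison with $D=4$), and the order is unchanged since $[\alpha]$ lies in $\pi_r^{-1}(B_d(x,2A))$ exactly when $\alpha(r)\in B_d(x,2A)$. Thus $\mathscr{U}_r$ has Property $\mathscr{P}_{\lambda,4}^{\,n}$ for every $\lambda=1/r$, which is exactly the assertion that $\ell\text{-dim}(\partial X,d_A)\le n<\infty$.

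The main obstacle, and the only place the hypothesis is used essentially, is the uniform order bound: a bound $N=N(A)$, independent of the center, on the number of pairwise $A$-separated points in any ball of radius $2A$. Properness alone permits this packing number to grow without bound across $X$ (for instance along a ray of increasing branching), so it is here that cocompactness is indispensable. I would isolate this as a short packing lemma for geometric actions: since $G$ acts properly, cocompactly, and by isometries, $X$ has uniformly bounded local geometry at every fixed scale (concretely, by \v{S}varc--Milnor $X$ is quasi-isometric to the finitely generated group $G$, whose balls of any fixed radius have uniformly bounded cardinality, and this transfers to uniform packing in $X$). Once this lemma is in place the remaining steps are routine, and I would set $n+1=N(A)$ and $c=4$. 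Finally I would note that $A$ and $x_0$ are arbitrary in the construction, so finiteness holds for every member of the family $\{d_{A,x_0}\}$ directly, without invoking quasi-symmetry invariance of $\ell$-dimension.
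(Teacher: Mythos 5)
Your proposal follows, in essence, the same route as the paper's proof of Theorem 3.2.1: Lemma 2.1.2 supplies the dictionary between the scale $\lambda$ on $(\partial X,d_A)$ and the sphere of radius $r=1/\lambda$, and then a single finite-order cover of $X$ at a fixed metric scale is pulled back through the projection $[\alpha]\mapsto\alpha(r)$ to produce covers of $\partial X$ with Property $\mathscr{P}^{n}_{\lambda,c}$ at every small $\lambda$; the paper even remarks after its proof that all that is really needed is one uniformly bounded, finite-order open cover of $X$ with large Lebesgue number. The only substantive difference is where that cover of $X$ comes from. The paper's Lemma 3.2.2 takes the orbit cover $\{B(gx_0,2R)\}_{g\in G}$ with $R>A$ large enough to swallow a compact set meeting every orbit: finite order comes from properness of the action, the Lebesgue bound from cocompactness, and the resulting linear constant is $c=4R/A$. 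You instead take balls of radius $2A$ centered at a maximal $A$-separated net, which yields the cleaner constant $c=4$ and needs no lower bound on the scale, but it shifts all the work into the uniform packing bound $N(A)$.

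That packing lemma is the one place your write-up has a genuine flaw. The claim itself is true, but the justification via \v{S}varc--Milnor does not work as stated: a quasi-isometry $f:X\to G$ with constants $(L,C)$ gives no lower bound on $d(f(x_i),f(x_j))$ once $A/L-C\le 0$, so for $A$ small relative to the quasi-isometry constants the images of your $A$-separated points may coincide, and the finiteness of balls in $G$ then bounds nothing; quasi-isometries are blind to geometry below their additive error, while $A>0$ here is arbitrary. The correct (and easier) argument is the one implicit in the paper's Lemma 3.2.2: choose $R_0$ with $X=G\cdot B(x_0,R_0)$, translate a given ball $B(p,2A)$ by $g\in G$ with $d(gp,x_0)\le R_0$, and observe that any $A$-separated subset of $B(p,2A)$ is carried isometrically into the compact ball $\overline{B}(x_0,R_0+2A)$, whose $A$-packing number is finite by properness of $X$. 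With that substitution in place of the \v{S}varc--Milnor transfer, your proof is complete and correct.
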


This proof relies on the existence of a single cover with Property $\mathscr{P}_{R,4R}^n$ for some $R,n>0$. 

\begin{lemma} Suppose a group $G$ acts geometrically on a proper CAT(0) space $(X,d)$. Then for all sufficiently large $R$, there exists a finite order open cover $\mathscr{V}$ of $X$ with \emph{mesh}$(\mathscr{V})\leq 4R$ and $\mathscr{L(V)}\geq R$. 
\end{lemma}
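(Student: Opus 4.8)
The plan is to build $\mathscr{V}$ directly from the orbit of the base point $x_0$. Since the action is geometric, it is cocompact, so the orbit $Gx_0$ is coarsely dense: there is a constant $D>0$ such that every $x\in X$ satisfies $d(x,gx_0)\le D$ for some $g\in G$. I would then fix any $R\ge D$ (this is the meaning of ``sufficiently large'') and take $\mathscr{V}=\{\,B(gx_0,2R):g\in G\,\}$, the collection of open $2R$-balls centered at orbit points. The three required properties are then verified one at a time.

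The mesh bound is immediate, since each $B(gx_0,2R)$ has diameter at most $4R$, giving $\mathrm{mesh}(\mathscr{V})\le 4R$. For the Lebesgue number, given $x\in X$ I choose $g\in G$ with $d(x,gx_0)\le D\le R$; by the triangle inequality $B(x,R)\subseteq B(gx_0,2R)$, so any point lying outside $B(gx_0,2R)$ is at distance at least $R$ from $x$. Hence $d(x,X-B(gx_0,2R))\ge R$, and since this holds for every $x$ we conclude $\mathscr{L}(\mathscr{V})\ge R$.

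The main work, and the step I expect to be the crux, is establishing that $\mathscr{V}$ has finite order, i.e. a uniform bound on the number of balls containing any single point. Fix $x\in X$ and set $N(x)=\#\{g\in G: d(x,gx_0)<2R\}$. I would first translate this count into a count based at $x_0$: pick $h\in G$ with $d(x,hx_0)\le D$; then for any $g$ with $d(x,gx_0)<2R$ one has $d(x_0,h^{-1}gx_0)=d(hx_0,gx_0)<2R+D$, and since $g\mapsto h^{-1}g$ is a bijection this yields $N(x)\le \#\{k\in G: k x_0\in B(x_0,2R+D)\}$, a quantity independent of $x$. It remains to see this last number is finite. Because $X$ is proper, $\overline{B}(x_0,2R+D)$ is compact, and proper discontinuity of the action forces the orbit to meet any compact set in only finitely many points: if $kx_0$ lies in the compact set $K=\overline{B}(x_0,2R+D)\ni x_0$, then $kx_0\in kK\cap K$, so $k$ belongs to the finite set $\{g:gK\cap K\ne\emptyset\}$.

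Thus $\mathrm{order}(\mathscr{V})$ is bounded by the fixed integer $\#\{k\in G:kx_0\in B(x_0,2R+D)\}$, completing the verification. The only genuine subtlety is keeping the order bound uniform over all of $X$, which is exactly what the isometric group action buys us: homogeneity of the orbit lets a single count near $x_0$ control the count near every point, after which properness plus proper discontinuity make that count finite.
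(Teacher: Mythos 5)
Your proof is correct and takes essentially the same route as the paper: the identical cover $\mathscr{V}=\{B(gx_0,2R)\}_{g\in G}$, cocompactness to get the Lebesgue number by moving $x$ close to an orbit point, and properness of the action to bound the order. If anything, your finite-order step (transporting the count at $x$ to a count at $x_0$ via the bijection $g\mapsto h^{-1}g$, then invoking properness on the compact ball $\overline{B}(x_0,2R+D)$) is more explicit than the paper's appeal to the cover ``looking the same everywhere.''
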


\begin{proof} Let $C\subseteq X$ be a compact set with $GC=X$ and choose $R$ large enough so that $C\subseteq B(x_0,R)$ for some $x_0\in X$. Then $\mathscr{V}=\cup_{g\in G}B(gx_0,2R)$ is a finite order open cover of $X$ with mesh bounded above by $4R$. Notice that the order of $\mathscr{V}$ is finite since the action of $G$ is proper, that is only finitely many $G$-translates of any compact set $C$ can intersect $C$. Since the cover is obtained by this nice geometric action, it must look the same everywhere. Thus, the order of $\mathscr{V}$ is bounded above by the finite number of translates of $gB(x_0,2R)$ intersecting $B(x_0,2R)$.  Furthermore, the Lebesgue number of $\mathscr{V}$ is at least $R$. For if we take $x\in X$ and let $g\in G$ such that $gx\in C\subseteq B(x_0,R)$. Then $d(gx, X-B(x_0,2R))\geq R$. As the action is by isometries:
\[R\leq d(gx, X-B(x_0,2r))=d(x, g^{-1}(X-B(x_0,2R)))=d(x, X-g^{-1}(B(x_0, 2R)))\]\[=d(x, X-B(g^{-1}x_0,2R))\]

Since $B(g^{-1}x_0,2R)\in\mathscr{V}$, and $d(x,B(g^{-1}x_0,2R))\geq R$, then $\mathscr{L}(\mathscr{V})\geq R$. 
\end{proof}

\begin{rk} Lemma 3.2.2 proves that $\dim_{\emph{mc}} X<\infty$ for a CAT(0) space admitting a geometric action.\end{rk}

\begin{proof} [Proof of Theorem 3.2.1] Fix $A>0$. By Lemma 3.2.2, we may choose a sufficiently large $R>A$ so that there is a finite order open cover $\mathscr{V}$ of $X$ with mesh$(\mathscr{V})\leq 4R$ and $\mathscr{L(V)}\geq R$. Set $n= $order$(\mathscr{V})$. 

\noindent Set $t_{\lambda}=\frac{1}{\lambda}$  for each $\lambda\in(0, \infty)$, and for each $V\in\mathscr{V}$, define
\[U_V=\{[\gamma] | \gamma \text{ is a geodesic ray based at } x_0 \text{ with } \gamma(t_{\lambda})\in V\}\]

We will show that $\mathscr{U}=\cup_{V\in\mathscr{V}}U_V$ is an open cover of $\partial X$ with order bounded above by $n$, Lebesgue number at least $\lambda$ and mesh at most $\frac{4R}{A}\lambda$. 

Clearly $\mathscr{U}$ is an open cover since $\mathscr{V}$ is an open cover of $X$. Furthermore, since $\gamma(t_{\lambda})$ can be in at most $n$-elements of $\mathscr{V}$, then $[\gamma]$ can be in at most $n$ elements of $\mathscr{U}$. 

We now show the Lebesgue number must be at least $\lambda$. Let $[\gamma]\in\partial X$ and $\gamma$ a geodesic ray in $X$ based at $x_0$ and asymptotic to $[\gamma]$. Since $\mathscr{L(V)}\geq R$, there is some $V\in\mathscr{V}$ such that $d(\gamma(t_{\lambda}), X-V)\geq R$. Consider then $d_A([\gamma],\partial X-U_V)$. If $[\beta]\in \partial X-U_V$, then $\beta(t_{\lambda})\notin V$ and hence $d(\gamma(t_{\lambda}),\beta(t_{\lambda}))\geq R$. Letting $a\in (0,\infty)$ be such that $d(\gamma(a), \beta(a))=A$, then $a\leq t_{\lambda}$ since $R\geq A$. Hence, 
\[ d_A([\gamma],[\beta])=\frac{1}{a}\geq \frac{1}{t_{\lambda}}=\lambda\]
Hence, $d_A([\gamma],\partial X-U_V)\geq \lambda$, so $\mathscr{L}(\mathscr{V})\geq \lambda$.

Lastly, we show mesh$(\mathscr{U})\leq \frac{4R}{A}\lambda$. Let $[\alpha],[\beta]\in U_V$ for some $U_V\in\mathscr{U}$. Let $\alpha,\beta$ be geodesic rays in $X$ based at $x_0$ and asymptotic to $[\alpha]$ and $[\beta]$, respectively. Let $a\in(0,\infty)$ be such that $d(\alpha(a), \beta(a))=A$. Since $\alpha(t_{\lambda}),\beta(t_{\lambda})\in V$, then $d(\alpha(t_{\lambda}),\beta(t_{\lambda}))\leq 4R$. There are then two cases to consider:

\noindent \underline{Case 1}: $d(\alpha(t_{\lambda}),\beta(t_{\lambda}))\leq A$. Then $a\geq t_{\lambda}$, so 
\[d_A([\alpha],[\beta])=\frac{1}{a}\leq\frac{1}{t_{\lambda}}=\lambda\leq \frac{4R}{A}\lambda\]

\noindent \underline{Case 2}: $A\leq d(\alpha(t_{\lambda}),\beta(t_{\lambda}))\leq 4R$. Then $a\leq t_{\lambda}$, and by Lemma 2.1.2, $d(\alpha(a),\beta(a))\leq \frac{a}{t_{\lambda}}d(\alpha(t_{\lambda}),\beta(t_{\lambda}))$. Thus, 
\[A=d(\alpha(a),\beta(a))\leq \frac{a}{t_{\lambda}}d(\alpha(t_{\lambda}),\beta(t_{\lambda}))\leq \frac{a}{t_{\lambda}}(4R)\]

Rearranging, we obtain that $a\geq \frac{At_{\lambda}}{4R}$, and thus:
\[d_A([\alpha],[\beta])=\frac{1}{a}\leq\frac{4R}{At_{\lambda}}=\frac{4R}{A}\lambda\]

Thus, there exists a $c\geq 1$ such that for every $\lambda>0$, there is an open cover $\mathscr{U}$ of $\partial X$ with order$(\mathscr{U})\leq n$, $\mathscr{L}(\mathscr{U})\geq\lambda$ and mesh$(\mathscr{U})\leq c\lambda$, proving $\ell$-dim$(\partial X, d_A)<\infty$. 

\end{proof}

The above proof really only required the existence of a single finite order uniformly bounded open cover with large Lebesgue number. Thus, if we know a proper CAT(0) space has finite asymptotic dimension, we do not need a group action to provide such a cover. We point out that there are some CAT(0) spaces that are known to have finite asymptotic dimension: $\RR^n$ for all $n\geq 0$, Gromov hyperbolic CAT(0) spaces, and CAT(0) cube complexes \cite{WR12}. Thus, there are spaces for which the following proposition will apply. 

\begin{prop} Suppose $(X,d)$ is a proper CAT(0) space with finite asymptotic dimension. Then $\ell$-\emph{dim}$(\partial X,d_A)\leq$ \emph{asdim}$X$. 
\end{prop}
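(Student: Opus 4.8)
The plan is to recognize that this proposition is a direct adaptation of the proof of Theorem 3.2.1. The remark following that proof observes that its argument uses only a single finite-order, uniformly bounded open cover of $X$ possessing a prescribed Lebesgue number; the hypothesis of finite asymptotic dimension supplies exactly such a cover, and moreover does so with optimal order. So I would simply replace the cover obtained from the geometric action (Lemma 3.2.2) with one coming straight from the definition of $\operatorname{asdim}$.

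Concretely, set $m=\operatorname{asdim}X$, which is finite by hypothesis, and apply the definition of asymptotic dimension with the particular parameter $\lambda=A$. This produces a uniformly bounded open cover $\mathscr{V}$ of $X$ with $\mathscr{L}(\mathscr{V})\geq A$ and $\operatorname{order}(\mathscr{V})\leq m+1$; write $M$ for its (finite) mesh. The essential point, which I would flag explicitly, is that this one fixed interior cover will handle every boundary scale $\lambda$ simultaneously, because the metric $d_A$ effectively rescales interior distances.

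Next I would run the boundary construction from the proof of Theorem 3.2.1 verbatim. For each $\lambda>0$ set $t_\lambda=1/\lambda$, and for each $V\in\mathscr{V}$ let $U_V=\{[\gamma]:\gamma\text{ is based at }x_0\text{ with }\gamma(t_\lambda)\in V\}$, putting $\mathscr{U}=\{U_V\}_{V\in\mathscr{V}}$. The three estimates transfer without change: $\operatorname{order}(\mathscr{U})\leq m+1$ since each $\gamma(t_\lambda)$ lies in at most $m+1$ members of $\mathscr{V}$; $\mathscr{L}(\mathscr{U})\geq\lambda$, using $\mathscr{L}(\mathscr{V})\geq A$ together with the monotonicity of $s\mapsto d(\gamma(s),\beta(s))$ coming from convexity; and, via the same two-case argument and Lemma 2.1.2, $\operatorname{mesh}(\mathscr{U})\leq\frac{M}{A}\lambda$. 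Taking $c=\max\{M/A,1\}\geq 1$ then gives a cover with Property $\mathscr{P}^m_{\lambda,c}$ for every $\lambda>0$, and hence $\ell$-$\dim(\partial X,d_A)\leq m=\operatorname{asdim}X$.

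The only place where care is genuinely needed — and the one conceptual difference from Theorem 3.2.1 — is understanding why a single $\operatorname{asdim}$-cover yields the \emph{linear} mesh-to-Lebesgue ratio required by $\ell$-dimension. The asymptotic dimension hypothesis provides no linear control relating mesh to Lebesgue number (that would be $\ell$-$\operatorname{asdim}$); instead, the linear relationship on the boundary is manufactured by the rescaling $t_\lambda=1/\lambda$, so that the \emph{fixed} mesh $M$ of one interior cover is converted into the bound $\frac{M}{A}\lambda$ on $\partial X$. Thus no linearly controlled asymptotic dimension assumption is needed, and the bound obtained is sharp in the order. I do not anticipate any serious obstacle beyond keeping the order bookkeeping honest (ensuring that the passage $V\mapsto U_V$ does not increase multiplicities) and verifying that $c\geq 1$ by enlarging as above when $M<A$.
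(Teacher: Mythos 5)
Your proposal is correct and follows essentially the same route as the paper: extract a single uniformly bounded open cover of $X$ with order $\leq m+1$ and Lebesgue number at least $A$ from the definition of $\operatorname*{asdim}$, then push it to $\partial X$ via the $t_\lambda = 1/\lambda$ construction of Theorem 3.2.1 to get Property $\mathscr{P}^m_{\lambda,c}$ at every scale $\lambda$. The paper's only additional remark is how to arrange openness of the interior cover if the ambient definition of asymptotic dimension does not already supply it (by enlarging $R$ and ``pushing in''), a point your argument sidesteps by invoking the paper's own open-cover formulation of Property $\mathscr{P}_\lambda^n$.
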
 

\begin{proof} Fix $A>0$. Since asdim$X\leq n$ for some $n>0$, there exists a uniformly bounded cover $\mathscr{V}$ with order $\mathscr{V}\leq n+1$ and $\mathscr{L(V)}\geq R$ for some $R\geq A$. We may assume that this cover is also open, because if it is not, we can simply choose a larger $R$, ``push in'' the cover $\mathscr{V}$ using the Lebesgue number, and obtain a smaller open cover with the desired properties. Repeat the same argument as in the proof of Theorem 3.2.2 to obtain an open cover $\mathscr{U}$ of $\partial X$ with order at most $n+1$, $\mathscr{L(U)}\geq \lambda$ and mesh$\mathscr{U}\leq \frac{\text{mesh}\mathscr{V}}{A}\lambda$. 
\end{proof} 

%


\section {The $\overline{d}_{x_0}$-metrics}

To define the second family of metrics on $\partial X$, fix a base point $x_0\in X$. For $[\alpha],[\beta]\in\partial X$, let $\alpha:[0,\infty)\to X$ and $\beta:[0,\infty)\to X$ be the unique representatives of $[\alpha]$ and $[\beta]$ based at $x_0$. Define $\overline{d}_{x_0}:\partial X\times\partial X\to \RR$ by 
		
		\[\overline{d}_{x_0}([\alpha],[\beta])=\int_0^{\infty}\frac{d(\alpha(r),\beta(r))}{e^r}\, dr\]

This family of metrics, unlike the $d_{A}$ metrics, takes into account the entire timespan of the geodesic rays. Due to this fact, it can naturally be extended to $\overline{X}=X\cup\partial X$. To do so, consider $x,y\in X$. Let $c_x:[0,d(x_0,x)]\to X$ be the geodesic from $x_0$ to $x$ and $c_y:[0,d(x_0,y)]\to X$ the geodesic segment from $x_0$ to $y$. Extend $c_x$ to $c'_x:[0,\infty)\to X$ by letting $c'_x(r)=x$ for all $r>d(x_0,x)$ and $c'(r)=c(r)$ otherwise. Extend $c_y$ to $c'_y:[0,\infty)\to X$ in a similar fashion. Then 
		\[\overline{d}_{x_0}(x,y)=\int_0^\infty\frac{d(c'_x(r),c'_y(r))}{e^r}\, dr\] 

\subsection{Basic Properties of the $\overline{d}_{x_0}$ metrics}

The following lemma that $\overline{d}_{x_0}$ is a metric is trivial. 

\begin{lemma} If $(X,d)$ is a proper CAT(0) space and $x_0\in X$, then $\overline{d}_{x_0}$ is a metric on $\partial X$. \end{lemma}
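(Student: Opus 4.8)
The plan is to verify the three metric axioms directly from the integral definition
\[
\overline{d}_{x_0}([\alpha],[\beta])=\int_0^{\infty}\frac{d(\alpha(r),\beta(r))}{e^r}\,dr,
\]
where $\alpha,\beta$ are the unique rays based at $x_0$ representing the given boundary points. First I would check that the integral is finite, since otherwise $\overline{d}_{x_0}$ is not even well-defined. The integrand is nonnegative, and the CAT(0) convexity of the distance function controls its growth: because $d(\alpha(0),\beta(0))=0$, convexity of $r\mapsto d(\alpha(r),\beta(r))$ along geodesics gives $d(\alpha(r),\beta(r))\leq r\cdot d(\alpha(1),\beta(1))$ (or more crudely $d(\alpha(r),\beta(r))\leq 2r$ by the triangle inequality, since each ray is unit speed from $x_0$). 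Hence the integrand is bounded by a constant times $r e^{-r}$, which is integrable on $[0,\infty)$, so the integral converges and $\overline{d}_{x_0}$ takes finite values.

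Next I would dispatch the three axioms. Symmetry is immediate since $d(\alpha(r),\beta(r))=d(\beta(r),\alpha(r))$ for every $r$. For positivity and the identity of indiscernibles: the integrand is nonnegative, so $\overline{d}_{x_0}\geq 0$, and $\overline{d}_{x_0}([\alpha],[\beta])=0$ forces $d(\alpha(r),\beta(r))=0$ for almost every $r$; since $r\mapsto d(\alpha(r),\beta(r))$ is continuous, it vanishes identically, so $\alpha=\beta$ as rays and hence $[\alpha]=[\beta]$. The uniqueness of the base-point representative (Proposition 2.1.4) is what lets me conclude equality of the \emph{classes}. For the triangle inequality, I would use the pointwise triangle inequality in $X$, namely $d(\alpha(r),\gamma(r))\leq d(\alpha(r),\beta(r))+d(\beta(r),\gamma(r))$, and then integrate both sides against the positive weight $e^{-r}\,dr$; monotonicity of the integral gives
\[
\overline{d}_{x_0}([\alpha],[\gamma])\leq \overline{d}_{x_0}([\alpha],[\beta])+\overline{d}_{x_0}([\beta],[\gamma]).
\]

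I do not expect any genuine obstacle here, which is presumably why the paper calls this lemma trivial. The only subtlety worth stating explicitly is the finiteness check, which relies on the CAT(0) convexity estimate for $d(\alpha(r),\beta(r))$ rather than on anything deep; every other step is a routine transfer of the corresponding property of the metric $d$ on $X$ through a positive-weight integral. One should be slightly careful that well-definedness depends on fixing the base point $x_0$ and using the unique representatives from Proposition 2.1.4, so that the assignment $[\alpha]\mapsto\alpha$ is unambiguous. With that in place, symmetry, nonnegativity, the identity of indiscernibles, and the triangle inequality all follow by integrating the corresponding facts for $d$, completing the proof.
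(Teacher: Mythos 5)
Your proof is correct and is exactly the routine verification that the paper omits (the paper simply declares this lemma trivial and gives no proof): finiteness via the bound $d(\alpha(r),\beta(r))\leq 2r$, then symmetry, identity of indiscernibles via continuity of the integrand and uniqueness of based representatives, and the triangle inequality by integrating the pointwise one. One small correction: your convexity claim $d(\alpha(r),\beta(r))\leq r\cdot d(\alpha(1),\beta(1))$ is false for $r>1$ --- since $r\mapsto d(\alpha(r),\beta(r))$ is convex and vanishes at $0$, the ratio $d(\alpha(r),\beta(r))/r$ is \emph{nondecreasing}, so the inequality reverses there (e.g.\ two rays in a tree that coincide up to time $10$ and then diverge) --- but this does not matter, because the crude triangle-inequality bound $d(\alpha(r),\beta(r))\leq 2r$ is the one you actually use, and it suffices for integrability.
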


\begin{lemma} The topology induced on $\overline{X}=X\cup \partial X$ by the $\overline{d}_{x_0}$ metric is equivalent to the cone topology on $\overline{X}$. 
\end{lemma}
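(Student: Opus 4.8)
The plan is to exploit the asymmetry between the two topologies. As recorded earlier, the cone topology makes $\overline{X}$ compact (since $X$ is proper), while the topology induced by $\overline{d}_{x_0}$ is Hausdorff, being metric. Hence it suffices to prove that the identity map $\mathrm{id}\colon(\overline{X},\mathscr{T}(x_0))\to(\overline{X},\overline{d}_{x_0})$ is continuous: a continuous bijection from a compact space onto a Hausdorff space is automatically a homeomorphism, which forces the two topologies to coincide. Because the cone topology is metrizable, hence first countable, continuity reduces to sequential continuity, so I would aim to show that $p_n\to p$ in the cone topology implies $\overline{d}_{x_0}(p_n,p)\to 0$.

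The first step is to reduce cone convergence to pointwise convergence of the extended geodesics. Writing $c'_q$ for the extension to $[0,\infty)$ of the geodesic from $x_0$ to $q$ (exactly as in the definition of $\overline{d}_{x_0}$), I claim $p_n\to p$ in the cone topology gives $d(c'_{p_n}(r),c'_p(r))\to 0$ for each $r>0$ (when $p\in X$, for every $r\neq d(x_0,p)$, which is all the integral sees). When $p=[\alpha]\in\partial X$ this is immediate from the shape of the basic neighborhoods $U(\alpha,r,\epsilon)$: eventual membership forces $d(x_0,p_n)>r$ and $d(p_r(p_n),\alpha(r))=d(c'_{p_n}(r),c'_\alpha(r))<\epsilon$, i.e.\ single-radius convergence at every radius. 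When $p\in X$ the cone topology near $p$ is just the metric topology of $X$, so $d(p_n,p)\to 0$, and continuous dependence of geodesics on their endpoints (a consequence of convexity of the distance function) gives $c'_{p_n}(r)\to c'_p(r)$ for each fixed $r\neq d(x_0,p)$.

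The second step is to upgrade pointwise convergence of the integrands to convergence of the integrals by domination. Since $d(x_0,c'_q(r))=\min\{r,d(x_0,q)\}\le r$ for every $q$ and $r$, the triangle inequality yields
\[
\frac{d(c'_{p_n}(r),c'_p(r))}{e^r}\le\frac{2r}{e^r},
\]
and $\int_0^\infty 2r\,e^{-r}\,dr<\infty$. The dominated convergence theorem then gives
\[
\overline{d}_{x_0}(p_n,p)=\int_0^\infty\frac{d(c'_{p_n}(r),c'_p(r))}{e^r}\,dr\longrightarrow 0,
\]
which establishes sequential continuity and hence the theorem. One can avoid measure theory by splitting the integral at a large radius $R$, bounding the tail by $\int_R^\infty 2r\,e^{-r}\,dr$ and invoking Lemma 2.1.2 to promote single-radius convergence to uniform convergence on $[0,R]$, via $d(c'_{p_n}(s),c'_p(s))\le\tfrac{s}{R}\,d(c'_{p_n}(R),c'_p(R))$ for $s\le R$.

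I expect the delicate point to be the reduction in the second paragraph, specifically handling an interior limit $p\in X$ when the sequence mixes points of $X$ and of $\partial X$, and confirming that the extension-by-constants in the definition of $\overline{d}_{x_0}$ causes no trouble (the only possible failure of pointwise convergence is at the single radius $r=d(x_0,p)$, which is irrelevant to the integral). The compactness-plus-Hausdorff device is precisely what lets us avoid the genuinely harder converse direction—recovering cone convergence, and in particular reconstructing the limiting geodesic ray, from mere smallness of the integral—so I would lean on it rather than attempt a two-sided comparison of basic neighborhoods.
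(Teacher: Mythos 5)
Your proof is correct, but it takes a genuinely different route from the paper's. The paper establishes the equivalence by a direct two-sided comparison of basic open sets: inside every metric ball it exhibits a cone-basic neighborhood (a set $U(\beta,t,\delta/2)$ for boundary points, an ordinary ball of $X$ for interior points), and conversely inside every cone-basic set $U(\alpha,t,\epsilon)$ it exhibits a metric ball $B_{\overline{d}}([\beta],\delta/e^t)$, each containment verified by explicit integral estimates. You instead prove only one direction --- continuity of the identity from the cone topology to the metric topology, via sequential convergence, the domination $d(c'_{p_n}(r),c'_p(r))\le 2r$, and dominated convergence --- and get the reverse direction for free, since a continuous bijection from a compact space onto a Hausdorff space is a homeomorphism; here you lean on properness of $X$, which (as recorded in Section 2.1) makes $\overline{X}$ cone-compact, and on first countability of the cone topology, which follows from the explicit neighborhood basis $U(c,r,\epsilon)$ with rational $r,\epsilon$. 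Your argument is shorter and avoids the harder metric-to-cone estimates entirely; what it gives up is the quantitative matching of neighborhoods that the paper's containments provide, and it is genuinely tied to compactness, whereas the paper's hands-on comparison is not. One caveat: your parenthetical measure-theory-free alternative misapplies Lemma 2.1.2. That lemma compares two curves that are geodesic on $[0,R]$, but the extended curves $c'_q$ are constant beyond parameter $d(x_0,q)$; for interior points the inequality $d(c'_{p_n}(s),c'_p(s))\le\frac{s}{R}d(c'_{p_n}(R),c'_p(R))$ can fail outright --- in $\EE^2$ with $x_0$ the origin, $p_n=(1,0)$, $p=(0,2)$, $R=2$, $s=1$, the left side is $\sqrt{2}$ while the right side is $\sqrt{5}/2$. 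The estimate is legitimate when the limit is a boundary point (then $d(x_0,p_n)>R$ eventually, so both curves are geodesic on $[0,R]$); for interior limits, replace it by the observation that the functions $r\mapsto d(c'_{p_n}(r),c'_p(r))$ are $2$-Lipschitz, so pointwise convergence is automatically uniform on $[0,R]$. This flaw affects only the aside; your main dominated-convergence argument is sound.
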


\begin{proof} Fix $x_0\in X$. We will denote $\overline{d}_{x_0}$ by $\overline{d}$. We first show the cone topology is finer than the metric topology by considering points in $X$ and $\partial X$, respectively. 

Let $y\in X$ and $B_{\overline{d}}(x,\epsilon)$ be a basic open set in $\overline{X}$ containing $y$ for some $\epsilon>0$ and $x\in \overline{X}$. Choose $\delta>0$ such that $B_{\overline{d}}(y,\delta)\subset B_{\overline{d}}(x,\epsilon)$ and $B_{\overline{d}}(y,\delta)\cap \partial X=\emptyset$. Consider the basic open set $B_d(y,\delta)$ in the cone topology. Clearly, $y\in B_d(y,\delta)$ and if $z\in B_d(y,\delta)$, then $z\in B_{\overline{d}}(y,\delta)$ since $\overline{d}(y,z)<d(y,z)$. Thus, 
\[y\in B_d(y,\delta)\subset B_{\overline{d}}(y,\delta)\subset B_{\overline{d}}(x,\epsilon)\]
 
 Now, let $[\beta]\in\partial X$, and consider the basic open set $B_{\overline{d}}(x,\epsilon)$ for $\epsilon>0$ and $x\in\overline{X}$. Choose $\delta>0$ such that $B_{\overline{d}}([\beta],\delta)\subset B_{\overline{d}}(x,\epsilon)$. Let $t>0$ be such that $e^{-t}<\delta/4$ and consider the basic open set $U(\beta,t,\frac{\delta}{2})$ in the cone topology. Clearly $[\beta]\in U(\beta,t,\frac{\delta}{2})$, so if $[\gamma]\in U(\beta,t,\frac{\delta}{2})\cap\partial X$, then 
\[\overline{d}([\beta],[\gamma])=\int_0^t\frac{d(\beta(r),\gamma(r))}{e^r}\, dr+\int_t^\infty\frac{d(\beta(r),\gamma(r))}{e^r}\, dr\]
\[\leq \int_0^t\frac{d(\gamma(t),\beta(t))}{e^r}\, dr+\int_t^\infty\frac{2(r-t)+d(\gamma(t),\beta(t))}{e^r}\, dr\]
\[=d(\gamma(t),\beta(t))+\frac{2}{e^t}\]
\[<\frac{\delta}{2}+\frac{\delta}{2}=\delta\]
Moreover, if $y\in U(\beta,t,\frac{\delta}{2})\cap X$ and $c_y:[0,d(x_0,y)]\to X$ is the geodesic from $x_0$ to $y$, then 
\[\overline{d}([\beta], x)=\int_0^t\frac{d(c_y(r),\beta(r))}{e^r}\, dr+\int_t^\infty\frac{d(c_y(r),\beta(r))}{e^r}\, dr\]
\[<\int_0^t\frac{d(c_y(t),\beta(t))}{e^r}\, dr+\int_t^\infty\frac{(r-t)+d(c_y(t),\beta(t))}{e^r}\, dr\]
\[<\frac{3\delta}{4}<\delta\]
These two calculations show $U(\beta,t,\frac{\delta}{2})\subset B_{\overline{d}}([\beta,\delta])$ and thus, 
\[[\beta]\in U(\beta,t,\frac{\delta}{2})\subset B_{\overline{d}}([\beta,\delta])\subset B_{\overline{d}}(x,\epsilon)\]

Now, we show the metric topology is finer than the cone topology, again by considering points in $X$ and $\partial X$. 

Let $y\in X$ and $B$ a basic open set in the cone topology. Choose $\delta>0$ such that $B_d(y,\delta)\subset B\cap X$. Consider the basic open set $B_{\overline{d}}(y,R)$ where $R=\frac{\delta}{e^{d(x_0,y)}}$ (if necessary, choose $R$ smaller so that $B_{\overline{d}}(y,R)\subset X$). Let $z\in B_{\overline{d}}(y,R)$ and $c_y$ and $c_z$ the geodesics connecting $x_0$ to $y$ and $z$, respectively. Set $t=\hbox{max}\{d(x_0,y),d(x_0,z)\}$. Then 
\[\overline{d}(y,z)>\int_t^\infty\frac{d(c_z(r),c_y(r))}{e^r}\, dr\]
\[=\int_t^\infty\frac{d(y,z)}{e^r}\, dr\]
\[=\frac{d(y,z)}{e^t}\]
\[\geq \frac{d(y,z)}{e^{d(x_0,y)}}\]
Since $\overline{d}(y,z)<\frac{\delta}{e^{d(x_0,y)}}$, by the above calculation, $z\in B_d(y,\delta)$ proving 
\[y\in B_{\overline{d}}(y,R)\subset B_d(y,\delta)\subset B\] 

For a boundary point $[\beta]\in \partial X$, let $U(\alpha,t,\epsilon)$ be a basic open set containing $[\beta]$ for $t,\epsilon>0$ and $\alpha$ a geodesic ray based at $x_0$. Choose $1>\delta>0$ so that \\$B_d(\beta(t), \delta)\cap S(x_0, t)\subset B_d(\alpha(t),\epsilon)\cap S(x_0,t)$. Consider the basic open set $B_{\overline{d}}([\beta],\frac{\delta}{e^t})$. If $[\gamma]\in B_{\overline{d}}([\beta],\frac{\delta}{e^t})\cap\partial X$, then $d(\beta(t),\gamma(t))<\delta$. Otherwise, 
\[\overline{d}([\gamma],[\beta])\geq\int_t^\infty\frac{\delta}{e^r}\, dr=\frac{\delta}{e^t}\]
Thus, $d(\gamma(t), \beta(t))<\delta<\epsilon$, so $[\gamma]\in U([\alpha],t,\epsilon)$. If $x\in B_{\overline{d}}([\beta],\frac{\delta}{e^t})\cap X$, we first notice that $\overline{d}(x,[\beta])\geq \overline{d}([\beta],\beta(d(x_0,x)))=e^{-d(x_0,x)}$. Thus, $d(x_0, x)\geq t$, otherwise $x\notin B_{\overline{d}}([\beta],\frac{\delta}{e^t})$. By the same argument just given for a boundary point, we see that $d(c_x(t),\beta(t))<\delta$ proving $x\in U([\alpha],t,\epsilon)$. Thus, 
\[[\beta]\in B_{\overline{d}}\left([\beta],\frac{\delta}{e^t}\right)\subset U([\alpha],t,\epsilon)\]
\end{proof} 

Thus far, we have been unable to prove analogs of Lemma 3.1.4 and Theorem 3.1.5 for this family of metrics. However, we will see that there are some significant advantages in using $\overline{d}_{x_0}$ for comparing dimension properties of $\partial X$ and $X$. In particular, we use the $\overline{d}_{x_0}$ metric to obtain a weak solution to Question 1.0.1 (which we have been unable to accomplish using the $d_A$ metrics).


\subsection{Dimension Results Using the $\overline{d}_{x_0}$ Metrics} 

\begin{thm} Suppose $X$ is a geodesically complete CAT(0) space and $\ell$-\emph{dim}$\partial X\leq n$, where $\partial X$ is endowed with the $\overline{d}_{x_0}$ metric. Then the macroscopic dimension of $X$ is bounded above by $2n+1$. \end{thm}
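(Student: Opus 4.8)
The goal is to exhibit a single uniformly bounded open cover of $(X,d)$ of order at most $2n+2$; by the definition of macroscopic dimension in Section 2.3 this gives $\dim_{\emph{mc}}X\le 2n+1$. The strategy is to transfer an $\ell$-dimension cover of the boundary onto the metric spheres $S(x_0,r)$, thicken radially into annuli, and stack the annuli with overlap multiplicity two. The geometric mechanism is that, in the $\overline{d}_{x_0}$ metric, the sphere of radius $r$ is a copy of $\partial X$ seen at scale $\approx e^{-r}$, while in the ambient metric $d$ the same features are magnified by $e^{r}$; choosing the boundary scale $\lambda\approx e^{-r}$ therefore produces pieces of bounded $d$-diameter.

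First I would record the two comparison estimates that drive everything. For geodesic rays $\alpha,\beta$ from $x_0$ the function $s\mapsto d(\alpha(s),\beta(s))$ vanishes at $0$ and has nondecreasing difference quotient $d(\alpha(s),\beta(s))/s$ (the content of Lemma 2.1.2), so integrating the tail of the defining integral yields
\[\overline{d}_{x_0}([\alpha],[\beta])\ \ge\ \int_r^\infty\frac{d(\alpha(s),\beta(s))}{e^s}\,ds\ \ge\ \frac{d(\alpha(r),\beta(r))}{r}\int_r^\infty\frac{s}{e^s}\,ds\ \ge\ d(\alpha(r),\beta(r))\,e^{-r},\]
hence $d(\alpha(r),\beta(r))\le \overline{d}_{x_0}([\alpha],[\beta])\,e^{r}$. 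Secondly, if two rays agree on $[0,r]$ (equivalently pass through a common point at radius $r$), then $d(\alpha(s),\beta(s))\le 2(s-r)$ for $s\ge r$, whence $\overline{d}_{x_0}([\alpha],[\beta])\le 2e^{-r}$.

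Next I would build the annular covers. Using geodesic completeness, every $x\in X$ with $d(x_0,x)=r$ lies on at least one geodesic ray from $x_0$; let $F(x)\subseteq\partial X$ be the nonempty set of boundary points of all such rays, so $\mathrm{diam}_{\overline{d}}F(x)\le 2e^{-r}$ by the second estimate. Fix $M>2c$ and a width $w$ with $0<w<\ln\tfrac{M}{2c}$, and cover $\{d(x_0,\cdot)>\rho_0\}$ by open annuli $A_k^{\circ}=\{a_k<d(x_0,\cdot)<b_k\}$ of width $w$, spaced so each radius meets at most two of them. For each $k$ take a boundary cover $\mathscr{U}_k$ with Property $\mathscr{P}_{\lambda_k,c}^n$ at scale $\lambda_k=\tfrac{M}{c}e^{-b_k}$ (legitimate once $\rho_0$, hence $b_k$, is large enough that $\lambda_k\le\lambda_0$), and set
\[\widehat{V}^{\,k}_U=\{x\in A_k^{\circ}\ :\ F(x)\subseteq U\}\qquad(U\in\mathscr{U}_k).\]
Each $\widehat{V}^{\,k}_U$ is open, since its complement in $A_k^{\circ}$ is the ``shadow'' of the compact set $\partial X\setminus U$, which is closed because properness of $X$ permits extracting, via Arzel\`a--Ascoli, a limiting ray through a limit point. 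The thinness condition $w<\ln\tfrac{M}{2c}$ forces $\mathrm{diam}_{\overline{d}}F(x)\le 2e^{-a_k}<\lambda_k\le\mathscr{L}(\mathscr{U}_k)$, so each $F(x)$ lies in a single element of $\mathscr{U}_k$; thus the $\widehat{V}^{\,k}_U$ cover $A_k^{\circ}$, and since $x\in\widehat{V}^{\,k}_U$ drives a fixed point of $F(x)$ into $U$, the order within annulus $k$ is at most $n+1$. Finally, the first comparison estimate together with a radial triangle inequality bounds the $d$-mesh: for $x,x'\in\widehat{V}^{\,k}_U$ at radii $r,r'<b_k$ one gets $d(x,x')\le c\lambda_k e^{b_k}+w=M+w$.

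To finish I would assemble the global cover as the core ball $B(x_0,\rho_0)$ together with all the $\widehat{V}^{\,k}_U$; every element is open with $d$-diameter at most $\max\{2\rho_0,\,M+w\}$, so the cover is uniformly bounded. As each point lies in at most two annuli and in at most $n+1$ sets per annulus, with the core ball contributing at most one more in its overlap region, the order is at most $2(n+1)=2n+2$, giving $\dim_{\emph{mc}}X\le 2n+1$. I expect the main obstacle to be precisely the reconciliation forced by branching geodesics: the radial ``direction'' of a point need not be unique, so the naive pushforward of the boundary cover is neither open nor of controlled order. The device $\widehat{V}^{\,k}_U=\{x:F(x)\subseteq U\}$ is what repairs both at once---openness from properness (closed shadows) and order $\le n+1$ from the nonemptiness (geodesic completeness) and small diameter of the fibers $F(x)$---and calibrating the scale $\lambda_k\sim e^{-b_k}$ so that fiber diameters, Lebesgue numbers, and $d$-meshes all fit together is the quantitative crux.
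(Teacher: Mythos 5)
Your proposal is correct and follows essentially the same route as the paper's proof: boundary covers at scale $\lambda_k\approx e^{-b_k}$ are pushed onto annuli of bounded width, your two comparison estimates (the growth bound $d(\alpha(r),\beta(r))\le \overline{d}_{x_0}([\alpha],[\beta])\,e^{r}$ for mesh control, and the bound $\overline{d}_{x_0}\le 2e^{-r}$ for rays sharing an initial segment for fiber/disjointness control) play exactly the roles of the paper's Claims 1--3, and stacking annuli with multiplicity two yields order $2(n+1)$ and hence $\dim_{\mathrm{mc}}X\le 2n+1$. The differences are implementation details that work in your favor: you use Lebesgue numbers where the paper invokes $(n+1)$-colored covers, and your fiber-based sets $\widehat{V}^{\,k}_U=\{x: F(x)\subseteq U\}$ are genuinely open (via the closed-shadow/Arzel\`a--Ascoli argument), a point the paper's ray-segment sets $V_{U_k}$ leave unaddressed even though its definition of macroscopic dimension requires an open cover.
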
 

The proof ``pushes in'' covers of the boundary obtained by knowing finite linearly controlled metric dimension of the boundary to create covers of the entire space. 

\begin{proof}[Proof of Theorem 4.2.1] We will show that there exists a uniformly bounded cover $\mathscr{V}$ of $X$ with order$\mathscr{V}\leq 2n+1$. Fix a base point $x_0\in X$. Since $\ell-$dim$\partial X\leq n$, there exists constants $\lambda_0\in(0,1)$ and $c\geq 1$ and $n+1$-colored coverings (by a single coloring set $A$) $\mathscr{U}_k$ of $\partial X$ with 
	\begin{itemize}
		\item mesh$\mathscr{U}_k\leq c\lambda_k$
		\item $\mathscr{L}(\mathscr{U}_k)\geq\lambda_k/2$
		\item $\mathscr{U}_k^a$ is $\lambda_k/2$-disjoint for each $a\in A$. 
	\end{itemize}
where $\lambda_k\leq\lambda_0$. Such a cover is guaranteed by \cite[Lemma 11.1.3]{BuSc07}. 

Choose $R>0$ so that $\frac{4}{e^R}<\lambda_0$ and set $\lambda_k=\frac{4}{e^{kR}}$. 

Let $B_k=\{x\in X|(k+\frac{1}{2})R\leq d(x,x_0)\leq (k+\frac{3}{2})R$ be an the annulus centered at $x_0$ for each $k=1,2,3,...$. We will cover each of these $B_k$ by ``pushing in'' the cover $\mathscr{U}_k$ of the boundary. To do so, let 

\[V_{U_k}=\{\gamma(kR,(k+2)R)|\gamma \text{ is a geodesic ray with } [\gamma]\in U_k\}\]
and $\mathscr{V_k}=\cup_{U_k\in \mathscr{U}_k}V_{U_k}$. Clearly $\mathscr{V}_k$ is a cover of $B_k$. 

\begin{center}
	\includegraphics[scale=0.25]{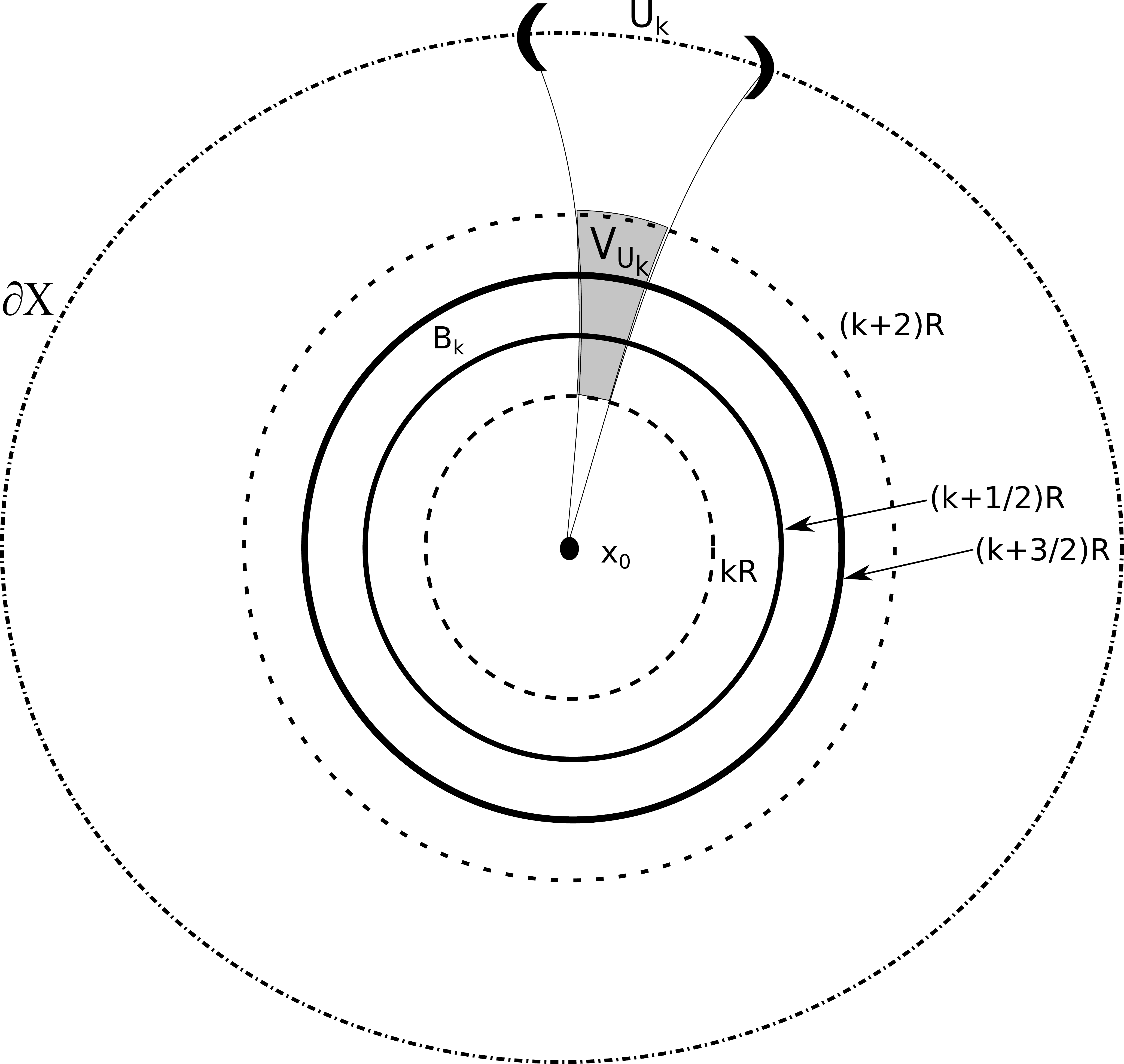}
	\end{center}

\underline{Claim 1:} $\mathscr{V}_k$ is $(n+1)$-colored by the same set $A$. That is,  $\mathscr{V}_k^a$ is a disjoint collection of sets for each $a\in A$. 

Suppose otherwise. That is, that there exists $V_U, V_{U'}\in \mathscr{V}_k^a$ with $V_U\cap V_{U'}\neq \emptyset$. If $x\in V_U\cap V_{U'}$ then there exists geodesic rays $\alpha$ and $\beta$ passing through $x$ with $[\alpha]\in U$ and $[\beta]\in U'$. Since $U,U'\in\mathscr{U}_k^a$, then $\overline{d}([\alpha],[\beta])\geq \lambda_k/2$. Thus, 
	\[\frac{\lambda_k}{2}\leq \overline{d}([\alpha],[\beta])=\int_0^\infty\frac{d(\alpha(r),\beta(r))}{e^r}\, dr\]
	\[=\int_{d(x,x_0}^\infty \frac{d(\alpha(r),\beta(r))}{e^r}\, dr\]
	\[\leq \int_{d(x,x_0}^\infty \frac{2(r-d(x,x_0)}{e^r}\, dr\]
	\[=\frac{2}{e^{d(x,x_0)}}\]
	\[<\frac{2}{e^{kR}}=\frac{\lambda_k}{2}\]
The last line provides the required contradiction. Thus, order$(\mathscr{V}_k)\leq n$ for each $k$. 

\underline{Claim 2:} For every $x,y\in V_{U_k}\in\mathscr{V}_k$ with $d(x_0,x)=(k+2)R=d(x_0,y)$, then $d(x,y)\leq 4ce^{2R}$. 
To show this, suppose otherwise. Choose $x,y\in \mathscr{V}_k$ with $d(x_0,x)=(k+2)R=d(x_0,y)$ and  $d(x,y)> 4ce^{2R}$. Let $\gamma_x$ and $\gamma_y$ be geodesic rays based at $x_0$  with $[\gamma_x],[\gamma_y]\in U_k$ and such that $\gamma_x((k+2)R)=x$ and $\gamma_y((k+2)R)=y$. Thus, 
\[\overline{d}([\gamma_x],[\gamma_y])\geq \int_{(k+2)R}^\infty\frac{d(\gamma_x(r),\gamma_y(r))}{e^r}\, dr\]
\[>\int_{(k+2)R}^\infty\frac{4ce^{2R}}{e^r}\, dr\]
\[=\frac{4c}{e^{kR}}=c\lambda_k\]

Since $[\gamma_x],[\gamma_y]\in U_k$ and mesh$\mathscr{U}_k\leq c\lambda_k$, we obtain the desired contradiction. 

\underline{Claim 3:} mesh$\mathscr{V}_k\leq 4ce^{2R}+2R$. 
Let $x,y\in V_{U_k}\in\mathscr{V}_k$. Let $\gamma_x$ and $\gamma_y$ be geodesic rays based at $x_0$ passing through $x$ and $y$, respectively. Suppose $\gamma_x(t)=x$ and $\gamma_y(s)=y$ for $t,s\in (kR,(k+2)R)$. Without loss of generality, suppose $s\leq t$. Then 
\[d(x,y)\leq d(x,\gamma_x(s))+d(\gamma_x(s),\gamma_y(s))\]
\[=(t-s)+d(\gamma_x(s),\gamma_y(s))\]
\[\leq 2R+d(\gamma_x((k+2)R),\gamma_y((k+2)R))\]
\[\leq 2R+4ce^{2R}\]

Thus, we have shown that mesh$\mathscr{V}_k\leq 4ce^{2R}+2R$ and order$\mathscr{V}_k\leq n$ for every $k$. Since $\mathscr{V}_k\cap\mathscr{V}_{k-1}=\emptyset$, then $\cup \mathscr{V}_k$ is a uniformly bounded cover of $X-B(x_0, \frac{3}{2}R)$ with order bounded above by $2n$. Letting 
$\mathscr{V}=\cup\mathscr{V}_k\cup B(x_0,2R)$ we obtain our desired cover. 

\end{proof}

The missing piece in the above argument that would prove finite asymptotic dimension is having arbitrarily large Lebesgue numbers for the cover. Thus, this argument is a potential step in finally answering the open asymptotic dimension question.


\section{Examples}

The previous sections highlight important properties and results that can be obtained using the $d_A$ and $\overline{d}$ metrics. Many of the results we obtained with the given techniques worked for one metric, but not the other. That is of course not to say that the same results cannot be obtained using different methods with the other metric. However, the different results do provide interesting comparisons between the two metrics and some insight into each ones strengths or weaknesses. In this section, we highlight some other differences by showing calculations done on $T_4$, the four valent tree.

\begin{ex} \emph{In this example, we show that $\overline{d}_{x_0}$ is a visual metric on $\partial T_4$, but $d_A$ is not a visual metric on $T_4$. }
\end{ex}
Recall that a metric $d$ on the boundary of a hyperbolic space is called a \textbf{visual metric} with parameter $a>1$ if there exists constants $k_1,k_2>0$ such that 
\[k_1a^{-(\zeta,\zeta')_p}\leq d(\zeta,\zeta')\leq k_2a^{-(\zeta,\zeta')_p}\] for all $\zeta,\zeta'\in\partial X$. 
[Here $(\zeta,\zeta')_p$ is the extended Gromov product based at $p\in X$. See \cite{BH99} for more information on visual metrics.]

Fix a base point $x_0\in X$ and $A>0$. Let $[\alpha],[\beta]\in\partial T_4$ and let $\alpha, \beta:[0,\infty)\to T_4$ be the corresponding geodesic rays based at $x_0$. Set $t=\emph{max}\{r|d(\alpha(r),\beta(r))=0\}$. Then $d(\alpha(r),\beta(r))=2(r-t)$ for all $r\geq t$. A simple computation shows:
\[\overline{d}_{x_0}([\alpha],[\beta])=\int_t^\infty\frac{2(r-t)}{e^r}\, dr=\frac{2}{e^t}\]

Furthermore, since $([\alpha],[\beta])_{x_0}=t$, we see that $\overline{d}_{x_0}$ is a visual metric on $T_4$ with parameter $e$. 

Now, suppose, by way of contradiction, that $d_A$ is visual with parameter $a>1$. Then there exists $k_1,k_2>0$ such that 
$k_1a^{-(\zeta,\zeta')_{x_0}}\leq d_A(\zeta,\zeta')\leq k_2a^{-(\zeta,\zeta')_{x_0}}$
for all $\zeta,\zeta'\in\partial X$.

Choose $n\in \ZZ^+$ large enough such that $\frac{a^n}{n+1}>k_2a^{A/2}$, which is possible since \\$\lim_{n\to\infty}\frac{a^n}{n+1}=\infty$. Let $\alpha,\beta:[0,\infty)\to X$ be any two proper geodesic rays based at $x_0$ with the property that $\alpha(t)=\beta(t)$ for all $t\leq \left \lceil{n-\frac{A}{2}}\right \rceil$ and $\alpha(t)\neq\beta(t)$ for all $t>\left \lceil{n-\frac{A}{2}}\right \rceil$ (that is, $\alpha$ and $\beta$ are two rays that branch at time $t=\left \lceil{n-\frac{A}{2}}\right \rceil$. Notice then that 
\[d_A([\alpha],[\beta])=\frac{1}{\left \lceil{n-\frac{A}{2}}\right \rceil+\frac{A}{2}} \, \text{   and   } \, ([\alpha],[\beta])_{x_0}=\left \lceil{n-\frac{A}{2}}\right \rceil\]

By the visibility assumption, 
\[d_A([\alpha],[\beta])\leq k_2a^{-([\alpha],[\beta])_{x_0}}\] 
and thus, 
\[\frac{1}{\left \lceil{n-\frac{A}{2}}\right \rceil+\frac{A}{2}}\leq k_2a^{-([\alpha],[\beta])_{x_0}}\]

Since $\left \lceil{n-\frac{A}{2}}\right \rceil\geq n-\frac{A}{2}$ and $\left \lceil{n-\frac{A}{2}}\right \rceil\leq n-\frac{A}{2}+1$, we obtain the following inequality: 

\[\frac{1}{n+1}\leq\frac{1}{\left \lceil{n-\frac{A}{2}}\right \rceil+\frac{A}{2}}\leq k_2a^{-([\alpha],[\beta])_{x_0}}=k_2a^{-\left \lceil{n-\frac{A}{2}}\right \rceil}\leq k_2a^{-(n-\frac{A}{2})}\]
Rearranging, we see that 
\[\frac{a^n}{n+1}\leq k_2a^{A/2},\]
a contradiction to the choice of $n$.

\begin{prop}$id_{\partial X}:(\partial X,d_A)\to (\partial X,\overline{d})$ is not a quasi-symmetry.
\end{prop}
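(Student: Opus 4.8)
The plan is to exploit the fundamental mismatch already visible in the preceding example: along a family of rays that branch off a fixed ray at time $t$, the distance $d_A$ decays only polynomially in $t$ (like $1/t$), whereas $\overline{d}$ decays exponentially (like $e^{-t}$). A quasi-symmetry, by the very definition, must carry bounded distance-ratios in the source to bounded distance-ratios in the target, so this disparity should be fatal. Since we only need to exhibit one space on which the identity fails to be a quasi-symmetry, I would work throughout in $T_4$.

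First I would record the two closed-form distance formulas on $\partial T_4$. Fixing $x_0\in T_4$ and $A>0$, for boundary points $[\alpha],[\beta]$ with branch time $t=(\alpha,\beta)_{x_0}$ the tree structure gives $d(\alpha(r),\beta(r))=2(r-t)$ for $r\geq t$; solving $2(a-t)=A$ for the defining value $a$ yields
\[
d_A([\alpha],[\beta])=\frac{1}{\,t+\tfrac{A}{2}\,},\qquad
\overline{d}([\alpha],[\beta])=\frac{2}{e^{t}},
\]
the second computation being exactly the one carried out in the preceding example.

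Next I would build the test configuration. Fix a single geodesic ray $\gamma$ based at $x_0$. For each $n\in\ZZ^+$, using the fact that every integer point of $T_4$ is a degree-four vertex, choose geodesic rays $\alpha_n,\beta_n$ based at $x_0$ that agree with $\gamma$ up to times $n$ and $2n$ respectively and then leave $\gamma$, so that $(\alpha_n,\gamma)_{x_0}=n$ and $(\beta_n,\gamma)_{x_0}=2n$. Substituting into the formulas above,
\[
\frac{d_A([\alpha_n],[\gamma])}{d_A([\beta_n],[\gamma])}
=\frac{2n+\tfrac{A}{2}}{\,n+\tfrac{A}{2}\,}
=2-\frac{A/2}{\,n+A/2\,}<2,
\qquad
\frac{\overline{d}([\alpha_n],[\gamma])}{\overline{d}([\beta_n],[\gamma])}
=e^{\,2n-n}=e^{n}.
\]
Thus in the $d_A$ metric the triple $[\alpha_n],[\beta_n],[\gamma]$ satisfies $d_A([\alpha_n],[\gamma])\leq 2\,d_A([\beta_n],[\gamma])$ for every $n$, while the corresponding ratio of $\overline{d}$-distances is $e^{n}$.

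Finally I would conclude by contradiction. If $id_{\partial X}$ were $\eta$-quasi-symmetric for some control function $\eta$, then applying the definition to each triple above (with the uniform value $t=2$) would force $\overline{d}([\alpha_n],[\gamma])\leq\eta(2)\,\overline{d}([\beta_n],[\gamma])$, i.e. $e^{n}\leq\eta(2)$ for all $n$; since $\eta(2)$ is a fixed finite number, this is absurd. The only genuinely delicate points are verifying that such rays $\alpha_n,\beta_n$ actually exist in $T_4$ and that the two closed-form distance formulas hold, but both follow at once from the tree structure and the computation in the preceding example. The real work is done by the uniform bound $\tfrac{2n+A/2}{n+A/2}<2$, which keeps the source ratio bounded while the target ratio escapes to infinity.
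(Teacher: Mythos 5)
Your proof is correct, but it takes a genuinely different and more elementary route than the paper. The paper fixes $\alpha$ and $\gamma$ branching at time $0$ and lets $\beta$ branch at time $n$, so its source-metric ratio $t=d_1([\alpha],[\gamma])/d_1([\beta],[\gamma])=2n+1$ is \emph{unbounded}; an arbitrary control function $\eta$ evaluated at $2n+1$ could grow arbitrarily fast, so the paper must first pin down the shape of $\eta$. It does this by proving that $(\partial T_4,d_A)$ is uniformly perfect (Lemma 5.0.4) and invoking Theorem 2.2.4 to force $\eta(t)=c\max\{t^{\delta},t^{1/\delta}\}$, after which the contradiction $e^n\leq c(2n+1)^{1/\delta}$ follows. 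Your configuration sidesteps all of that: by pushing \emph{both} branch points outward (to times $n$ and $2n$), you keep the source ratio uniformly below $2$ while the target ratio is $e^n$, so the definition of quasi-symmetry alone gives $e^n\leq\eta(2)$ for all $n$ — a contradiction with no structural information about $\eta$ needed. What your approach buys is self-containedness and generality (it works for arbitrary $A$ directly and for an arbitrary control function, rather than reducing to $A=1$ and relying on the uniform-perfectness machinery); what the paper's approach buys is the auxiliary Lemma 5.0.4, which is of independent interest since uniform perfectness is exactly the hypothesis under which linearly controlled dimension is a quasi-symmetry invariant. The only details you should make explicit are the ones you already flag: the branch times $n$ and $2n$ are integers, hence legitimate branching vertices of $T_4$ when the basepoint is a vertex, and the two closed-form distance formulas $d_A=\frac{1}{t+A/2}$ and $\overline{d}=2e^{-t}$ for rays branching at time $t$, both of which are verified exactly as in Example 5.0.1.
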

 
 We prove this proposition by showing it in the case that $X=T_4$. For this, we need the following lemma. 

\begin{lemma} $(\partial T_4,d_A)$ is uniformly perfect. \end{lemma}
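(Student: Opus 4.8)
The plan is to reduce everything to an explicit formula for $d_A$ on $\partial T_4$ and then exploit the fact that the distances realized from a fixed boundary point form a decreasing sequence with multiplicative gaps bounded below. First I would fix $x_0$ to be a vertex of $T_4$ and record, exactly as in Example 5.0.1, that if $\alpha,\beta$ are the geodesic rays based at $x_0$ representing $[\alpha],[\beta]$ and $t=\max\{r:\alpha(r)=\beta(r)\}$ is their branch time, then $d(\alpha(r),\beta(r))=2(r-t)$ for $r\ge t$; hence the value $a$ with $d(\alpha(a),\beta(a))=A$ is $a=t+A/2$, giving $d_A([\alpha],[\beta])=\tfrac{1}{t+A/2}$. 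Because $T_4$ is the simplicial $4$-valent tree, branch times are nonnegative integers, and at every vertex $\alpha(n)$ there remain two unused directions; so for each fixed $[\alpha]$ and each integer $n\ge 0$ there is a boundary point branching from $\alpha$ exactly at $\alpha(n)$. Thus the set of distances from $[\alpha]$ to the rest of $\partial T_4$ is exactly $\{D_n:=\tfrac{1}{n+A/2}:n=0,1,2,\dots\}$.

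Next I would verify uniform perfectness directly from the definition with constant $c=\tfrac{A+2}{A}>1$. Fix $[\alpha]$ and $r>0$ with $\partial T_4-B_{d_A}([\alpha],r)\neq\emptyset$; since $D_0=2/A$ is the largest achievable distance, this forces $r\le 2/A$. Let $n^{\ast}$ be the smallest index with $D_{n^{\ast}}<r$; nonemptiness of the complement guarantees $n^{\ast}\ge 1$, and minimality gives $D_{n^{\ast}-1}\ge r$. The ratio of consecutive distances is $\tfrac{D_n}{D_{n-1}}=\tfrac{n-1+A/2}{n+A/2}=1-\tfrac{1}{n+A/2}$, which is increasing in $n$ and hence minimized at $n=1$, where it equals $\tfrac{A}{A+2}$. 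Therefore $D_{n^{\ast}}\ge \tfrac{A}{A+2}D_{n^{\ast}-1}\ge \tfrac{A}{A+2}r=r/c$, while $D_{n^{\ast}}<r$ by choice of $n^{\ast}$. Any boundary point $[\beta]$ with branch time $n^{\ast}$ then lies in $B_{d_A}([\alpha],r)-B_{d_A}([\alpha],r/c)$, which is the required nonempty annulus.

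I expect the only genuine subtlety to be locating the worst-case multiplicative gap among the admissible distances and confirming that it is bounded below independently of $r$; the computation above shows this gap is largest between branch times $0$ and $1$ and equals $\tfrac{A}{A+2}$, which is precisely what pins down $c$. The remaining points are routine but should be stated carefully: that branching rays realizing each integer branch time genuinely exist in $T_4$ (using $4$-valence at every vertex), and that the open-ball convention is handled correctly at the endpoint $r=2/A$, where $D_0=r$ is excluded from $B_{d_A}([\alpha],r)$ yet witnesses nonemptiness of the complement. If one prefers not to assume $x_0$ is a vertex, the branch times become $\{s+n:n\ge 0\}$ for a fixed offset $s\in[0,1)$; the distance set is still a sequence with multiplicative gaps bounded below, and the same argument yields a constant $c$ depending on $A$ and $s$.
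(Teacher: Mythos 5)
Your proof is correct and follows essentially the same route as the paper's: both rest on the explicit branch-time formula $d_A([\alpha],[\beta])=\frac{1}{t+A/2}$ for rays in $T_4$ branching at integer time $t$, and both exhibit a ray whose branch time puts its distance from $[\alpha]$ inside the required annulus. The only difference is organizational: the paper first reduces to $A=1$ via quasi-symmetry invariance of uniform perfectness (using Lemma 3.1.3) and works with the constant $c=4$, whereas you handle all $A>0$ at once by bounding the ratio of consecutive achievable distances, obtaining the explicit constant $c=\frac{A+2}{A}$ --- a slightly more self-contained variant of the same argument.
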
 

\begin{proof} Fix a base point $x_0\in T_4$. It suffices to show $(\partial T_4, d_1)$ is uniformly perfect since $(\partial T_4, d_A)$ is quasi-symmetric to $(\partial T_4, d_1)$ for every $A>0$ by Lemma 3.1.3. Let $[\alpha]\in\partial T_4$ and $\alpha:[0,\infty)\to T_4$ the ray asymptotic to $[\alpha]$ based at $x_0$.  Since diam$(T_4,d_1)=2$, we show that $B([\alpha],r)-B([\alpha],\frac{r}{4})\neq\emptyset$ for all $0<r<2$.  Consider the geodesic ray $\beta:[0,\infty)\to T_4$ based at $x_0$ with $\alpha(t)=\beta(t)$ for all $t\leq \lceil \frac{1}{r}\rceil$ and $\alpha(t)\neq\beta(t)$ for all $t>\lceil\frac{1}{r}\rceil$. Then, $d_1([\alpha],[\beta])=\frac{1}{\lceil1/r\rceil+1/2}$ which means $d_1([\alpha],[\beta])<r$. Moreover, $\lceil\frac{1}{r}\rceil +\frac{1}{2}\leq \frac{1}{r}+1+\frac{1}{2}<\frac{1}{r}+\frac{3}{r}$, so $d_1([\alpha],[\beta])>\frac{r}{4}$. This proves $[\beta]\in B([\alpha],r)-B([\alpha],\frac{r}{4})$. 
\end{proof}

\begin{proof}[Proof of Proposition 5.0.3]  Let $X=T_4$. We will show that $id:(\partial T_4,d_A)\to(\partial T_4,\overline{d})$ is not a quasi-symmetry for $A=1$ and then refer to Proposition 3.1.3 for the full claim. Fix a base point $x_0\in T_4$ and suppose, by way of contradiction, that $id:(\partial T_4,d_1)\to(\partial T_4,\overline{d})$ is a quasi-symmetry. By Theorem 2.2.4 and Lemma 5.0.4,  $\eta$ must be of the form $\eta(t)=c\, \text{max}\{t^{\delta},t^{1/\delta}\}$ where $c\geq 1$ and $\delta\in(0,1]$ depends only on $f$ and $X$. Let $\alpha,\gamma:[0,\infty)\to T_4$ be two proper geodesic rays such that $\alpha(t)\neq\gamma(t)$ for all $t>0$. Then 
	\[d_1([\alpha],[\gamma])=\frac{1}{1/2}=2\]
	\[\overline{d}([\alpha],[\gamma])=\int_0^{\infty}\frac{2r}{e^r}\, dr =2\]

Choose $n\in\ZZ^+$ large enough such that $n-\frac{1}{\delta}\ln(2n+1)>\ln(c)$, which is possible since $\lim_{n\to\infty} n-\frac{1}{\delta}\ln(2n+1)=\infty$. 

Let $\beta:[0,\infty)\to T_4$ be a proper geodesic ray with the property that $\beta(t)=\gamma(t)$ for all $t\leq n$ and $\beta(t)\neq\gamma(t)$ for all $t>n$. Then 
	\[d_1([\beta],[\gamma])=\frac{1}{n+1/2}=\frac{2}{2n+1}\]
	\[\overline{d}([\beta],[\gamma])=\int_n^{\infty}\frac{2(r-n)}{e^r}\, dr =\frac{2}{e^n}\]

Set $t=\frac{d_1([\alpha],[\gamma])}{d_1([\beta],[\gamma])}=2n+1$.

By the quasi-symmetry assumption, 

\[\overline{d}([\alpha],[\gamma])\leq \eta(t)\overline{d}([\beta],[\gamma])\]
and thus, 
\[2\leq\eta(2n+1)\frac{2}{e^n}\]
\[\Rightarrow e^n\leq\eta(2n+1)=c\, \text{max}\{(2n+1)^{\delta},(2n+1)^{1/\delta}\}\]
\[\Rightarrow e^n\leq c(2n+1)^{1/\delta}\]
\[\Rightarrow n\leq \ln(c)+\frac{1}{\delta}\ln(2n+1)\]
This last inequality contradicts the choice of $n$, proving our claim. 

\end{proof}

%

\section{Open Questions}

Since metrics on visual boundaries of CAT(0) spaces have not been widely studied, there is still much work to be done in this area. We hope that the results here show the development of these metrics is worthwhile and provides the opportunity to study CAT(0) boundaries from a different point of view, which may of course lead to answering interesting unanswered questions about these boundaries. We end with a list of open questions.

\begin{question} Is there an extension of $d_A$ to $\overline{X}$ that is equivalent to the cone topology on $\overline{X}$? \end{question} 

\begin{question} In the proof of Theorem 3.1.5, a different control function is used for each $g\in G$. Is there a single control function  for the entire group? \end{question}

\begin{question} Are all of the members of the $\overline{d}_{x_0}$ family of metrics quasi-symmetric?\end{question}
The answer to this question is yes in the extreme cases that $X$ is $\RR^2$ or the four-valent tree by simple calculations. If it can be shown that the answer is yes for any CAT(0) space $X$, then we could easily show that the group of isometries of a CAT(0) space acts by quasi-symmetries on the boundary as in Theorem 3.1.5. 

\begin{question} Is the linearly controlled dimension of CAT(0) group boundaries finite when the boundary is endowed with the $\overline{d}_{x_0}$ metric? Furthermore, if the answer to Question 6.0.7 is no, can a CAT(0) boundary with two different metrics from the same family $\{\overline{d}_{x_0}\}$ have different linearly controlled dimension? \end{question} 

\begin{question} For a hyperbolic group $G$, $\ell\emph{-dim}\partial X=\emph{dim}\partial X$. Can the same be said for CAT(0) group boundaries? In particular, can it be shown for a CAT(0) group $G$, $\ell\emph{-dim}\partial X\leq \emph{dim}\partial X$ with respect to either the $d_A$ metric or $\overline{d}$ metric?
\end{question}

\begin{question} In Example 1, we showed that $\overline{d}_{x_0}$ is a visual metric on $\partial T_4$. Is $\overline{d}_{x_0}$ a visual metric on the boundary of any $\delta$-hyperbolic space? \end{question}

%

\bibliography{Biblio}{}
\bibliographystyle{amsalpha}

\end{document}